\DeclareMathAlphabet{\pazocal}{OMS}{zplm}{m}{n}
\newtheorem{prop}{Proposition}[section]
\newtheorem{lem}{Lemma}[section]
\newtheorem{thm}{Theorem}[section]
\newtheorem{defin}{Definition}[section]
\newcommand{\sP}{\mathcal{P}}
\newcommand{\sQ}{\mathcal{Q}}
\newcommand{\sD}{\pazocal{D}}
\newcommand{\sF}{\pazocal{F}}
\newcommand{\sH}{\pazocal{H}}
\newcommand{\sG}{\pazocal{G}}
\newcommand{\spn}{\operatorname{span}}
\newcommand{\bz}{\mathbf{z}}
\newcommand{\bX}{{\mathbf{X}}}
\newcommand{\tGamma}{{\widetilde{\Gamma}}}
\newcommand{\rn}{\mathbb{R}}
\def\simiid{\overset{iid}{\sim}}
\title{Generalized Identifiability Bounds for Mixture Models with Grouped Samples}
 \addr  \textsuperscript{\bf*}Machine Learning Group, Technische Universit\"at Berlin\\
 \addr \textsuperscript{\bf \dag}Berlin Institute for the Foundations of Learning and Data\\10587 Berlin, Germany
\begin{document}

\maketitle

\begin{abstract}%
  Recent work has shown that finite mixture models with $m$ components are identifiable, while making no assumptions on the mixture components, so long as one has access to groups of samples of size $2m-1$ which are known to come from the same mixture component. In this work we generalize that result and show that, if every subset of $k$ mixture components of a mixture model are linearly independent, then that mixture model is identifiable with only $(2m-1)/(k-1)$ samples per group. We further show that this value cannot be improved. We prove an analogous result for a stronger form of identifiability known as ``determinedness'' along with a corresponding lower bound. This independence assumption almost surely holds if mixture components are chosen randomly from a $k$-dimensional space. We describe some implications of our results for multinomial mixture models and topic modeling.
\end{abstract}

\begin{keywords}%
  nonparametric mixture model, multinomial mixture model, identifiability, topic modeling
\end{keywords}

\section{Introduction} \label{sec:intro}
Finite mixture models have seen extensive use in statistics and machine learning. In a finite mixture model one assumes that samples are drawn according to a two-step process. First an unobserved \emph{mixture component}, $\mu$, is randomly selected according to a probability measure over probability measures $\sP= \sum_{i=1}^m a_i \delta_{\mu_i}$ ($\delta$ is the Dirac measure). Next an observed sample $X$ is drawn from $\mu$, $X \sim \mu$. A central question in mixture modeling theory is that of \emph{identifiability} \citep{teicher63}: whether $\sP$ is uniquely determined from the distribution of $X$. From the law of total probability it follows that $X$ is distributed according to $\sum_{i=1}^m a_i \mu_i$. Excepting trivial cases, a mixture model is not identifiable unless one makes additional assumptions about the mixture components. A standard assumption is that the mixture components $\mu_1,\ldots, \mu_m$ are elements of some parametric class of densities. A common choice for this class is the set of multivariate Gaussian distributions, which yields the well-known and frequently-used Gaussian mixture model. This model is indeed known to be identifiable \citep{anderson14,bruni85,yakowitz68}. A natural question to ask is whether it is possible for a mixture model to be identifiable without such parametric assumptions.

In \cite{vandermeulen19} the authors consider an alternative setting for mixture modeling where \emph{no} assumptions are made on the mixture components $\mu_1,\ldots,\mu_m$ and, instead of having access to one sample from each unobserved mixture component $\mu\sim \sP$, one has access to groups of $n$ samples, $\bX = \left(X_1,\ldots,X_n\right)$ which are known to be independently sampled from $\mu$, i.e. $X_1,\ldots,X_n \simiid \mu$. In \cite{vandermeulen19} the authors develop several fundamental bounds relating the identifiability of $\sP$ to the number of samples per group $n$ and the number of mixture components $m$. These bounds consider extremal cases where there are either no assumptions on the mixture components or they are assumed to be linearly independent. If the mixture components are assumed to lie in a finite dimensional space, such as when the sample space is finite, then it is reasonable to assume that the collection of all mixture components is linearly dependent, however sufficiently small subsets of the mixture components are linearly independent.

In this paper we prove two fundamental bounds relating the identifiability of $\sP = \sum_{i=1}^m a_i \delta_{\mu_i}$ to the number of mixture components $m$, the number of samples per group $n$, and a value $k$ which describes the degree of linear independence of the mixture components.  We show that if every subset of $k$ measures in $\mu_1,\ldots,\mu_m$ are linearly independent, then $\sP$ is the simplest mixture, in terms of the number of mixture components, yielding the distribution on $\bX$ if $2m-1\le (k-1)n$. If $n$ is even-valued and $2m-2 \le (k-1)(n-1)$ then $\sP$ is the \emph{only} mixture, with any number of components, yielding the distribution on $\bX$. We furthermore show that the first bound is tight and that the second bound is nearly tight. Most of the bounds in \cite{vandermeulen19} are special cases of the bounds presented in this paper. We also show that this linear independence assumption occurs naturally, similarly to results in \cite{kargas18}, and describe some practical implications of our results.

\section{Background}
We introduce the mathematical setting used in the rest of the paper before reviewing existing results.

\subsection{Problem Setting} \label{ssec:problem-setting}
The setting described here is drawn from \cite{vandermeulen19} and is highly general. Let $\left(\Omega, \sF\right)$ be a $\sigma$-algebra and let $\sD$ be the space of probability measures on $\left(\Omega, \sF\right)$. Note that $\sD$ is contained in the vector space of finite signed measures on $\left(\Omega, \sF\right)$, a fact which we will use often. For an element $\gamma$, let $\delta_\gamma$ denote the Dirac measure at $\gamma$. We equip $\sD$ with the power $\sigma$-algebra. We call a measure $\sP$ on $\sD$ a \emph{mixture of measures} if it is a measure on $\sD$ of the form $\sP = \sum_{i=1}^m a_i \delta_{\mu_i}$ with $a_i>0$, $\sum_{i=1}^m a_i = 1$, and $m < \infty$. We will always assume that the representation of a mixture of measures has minimal $m$, i.e. there are no repeated $\mu_i$ in the summands. For a full technical treatment of the concept of minimal representation see \cite{vandermeulen19}. We refer to the measures $\mu_1,\ldots,\mu_m$ as \emph{mixture components}. We now introduce the model we wish to investigate in this paper which is termed the \emph{grouped sample setting} in \cite{vandermeulen19}. If we let $\mu \sim \sP$ and $X_1,\ldots,X_n\simiid \mu$ then the probability distribution for $\bX = \left(X_1,\ldots,X_n\right)$ is $\sum_{i=1}^m a_i \mu_i^{\times n}$. With this in mind we introduce the following operator,
\begin{equation}\label{eqn:grouped-sample} 
  V_n\left(\sP\right) \triangleq \sum_{i=1}^m a_i \mu_i^{\times n}.
\end{equation}
To give some concreteness to this setting it can be helpful to consider the application of topic modeling with a finite number of topics. Here $\mu_1,\ldots,\mu_m$ are topics, which are simply distributions over words. The measure $\sP$ designates a topic $\mu_i$ being chosen with probability $a_i$. The group of samples $(X_1,\ldots,X_n)$ represent a document containing $n$ words as a bag of words. A collection of documents $\bX_1,\bX_2,\ldots$ are then iid samples of $V_n(\sP)$. In this setting we are interested in the number of words necessary per document to recover the true topic model $\sP$.

We will be investigating two forms of identifiability, \emph{$n$-identifiability} where $\sP$ is the simplest mixture of measures, in terms of the number of mixture components, yielding the distribution on $(X_1,\ldots,X_n)$ and \emph{$n$-determinedness} where $\sP$ is the \emph{only} mixture of measures yielding the distribution on $(X_1,\ldots,X_n)$. We finish this section with the following two definitions which capture these two notions of identifiability.
\begin{defin}\label{def:ident}
  A mixture of measures $\sP = \sum_{i=1}^m a_i \delta_{\mu_i}$ is \emph{$n$-identifiable} if there exists no mixture of measures $\sP' \neq \sP$ with $m$ or fewer components such that $V_n(\sP') = V_n(\sP)$.
\end{defin}
\begin{defin}\label{def:det}
  A mixture of measures $\sP$ is \emph{$n$-determined} if there exists no mixture of measures $\sP'\neq \sP$ such that $V_n\left( \sP' \right) = V_n\left( \sP \right)$.
\end{defin}
\subsection{Previous Results}
Here we recall several results from \cite{vandermeulen19}. In that paper the authors prove five bounds relating identifiability or determinedness to the geometry of the mixture components, the number of mixture components $m$, and the number of samples per group $n$. For brevity we have summarized these bounds in Table \ref{tab:oldresults}. \cite{vandermeulen19} showed that none of these bounds are improvable via matching lower bounds. For clarity we include an example of a precise statement of an entry in this table.

\begin{theorem}[Table \ref{tab:oldresults} Row Four or \cite{vandermeulen19} Theorem 4.6]
  Let $\sP = \sum_{i=1}^m a_i \delta_{\mu_i}$ be a mixture of measures where $\mu_1,\ldots,\mu_m$ are linearly independent. Then $\sP$ is 4-determined.\sloppy
\end{theorem}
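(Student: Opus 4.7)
My plan is to attack the problem in two stages. Write $\sP' = \sum_{j=1}^{m'} b_j \delta_{\nu_j}$ with distinct $\nu_j$ and $b_j>0$, and assume $V_4(\sP) = V_4(\sP')$. In the first stage I show that every $\nu_j$ already lies in $E := \spn\{\mu_1,\ldots,\mu_m\}$. The idea is that if some $\nu_{j_0}\notin E$, then linear independence of $\mu_1,\ldots,\mu_m,\nu_{j_0}$ makes the evaluation map $f\mapsto (\mu_1(f),\ldots,\mu_m(f),\nu_{j_0}(f))$ on bounded measurable $f$ surjective onto $\rn^{m+1}$ (otherwise the image would lie in a hyperplane giving a nontrivial linear relation among the measures when tested against indicator functions), so one can select an $f$ vanishing on every $\mu_i$ but with $\nu_{j_0}(f)\neq 0$. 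Evaluating $V_4(\sP)=V_4(\sP')$ against the tensor functional $f^{\otimes 4}$ collapses the identity to
\[
0 \;=\; \sum_{i=1}^m a_i\, \mu_i(f)^4 \;=\; \sum_{j=1}^{m'} b_j\, \nu_j(f)^4,
\]
and since every $b_j>0$ and every summand on the right is nonnegative (this is where the even exponent $4$ is first crucial), each $\nu_j(f)=0$, contradicting $\nu_{j_0}(f)\neq 0$.

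In the second stage I expand. Write $\nu_j = \sum_i \beta_{j,i}\mu_i$ and plug into $V_4(\sP')$, expanding in the tensor basis $\{\mu_{i_1}\otimes\mu_{i_2}\otimes\mu_{i_3}\otimes\mu_{i_4}\}$, which is linearly independent because the $\mu_i$ are (apply coordinatewise evaluation against chosen functionals). Matching with $V_4(\sP) = \sum_i a_i\mu_i^{\times 4}$ and reading off the coefficient of $\mu_p\otimes\mu_p\otimes\mu_q\otimes\mu_q$ for any $p\neq q$ gives
\[
0 \;=\; \sum_j b_j\, \beta_{j,p}^2\beta_{j,q}^2,
\]
which by positivity of the $b_j$ (the even-exponent positivity trick, now at the level of a polynomial identity rather than a dual evaluation) forces $\beta_{j,p}\beta_{j,q}=0$ for every $j$ and every $p\neq q$. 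So for each $j$ at most one coordinate $\beta_{j,i_j}$ is nonzero, and since $\nu_j$ and $\mu_{i_j}$ are both probability measures (total mass $1$) we must have $\beta_{j,i_j}=1$, hence $\nu_j=\mu_{i_j}$. Distinctness of the $\nu_j$ makes $j\mapsto i_j$ injective, and matching the diagonal coefficient of $\mu_i^{\times 4}$ together with $a_i>0$ forces that map to be surjective with $b_{j(i)}=a_i$, so $\sP'=\sP$.

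The main obstacle is the first stage: carefully justifying the existence of the separating bounded measurable $f$ in the abstract measurable setting, which amounts to verifying that bounded measurable functions pair with the finite-dimensional subspace $\spn\{\mu_1,\ldots,\mu_m,\nu_{j_0}\}$ of signed measures without degeneracy. This is really standard duality, but it needs a careful statement because the ambient space of signed measures is infinite-dimensional and only indicators of measurable sets are immediately available as test functionals. Once that is in place, everything else is straightforward bookkeeping, driven by the same positivity-of-even-power observation used twice: once dually against $f^{\otimes 4}$ to locate the $\nu_j$ in $E$, and once on coefficient sums to collapse each $\nu_j$ to a single $\mu_{i_j}$. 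It is precisely this second use — the need for a squared off-diagonal monomial like $\beta_{j,p}^2\beta_{j,q}^2$ — that forces the exponent to be at least $4$, matching the statement that $\sP$ is $4$-determined.
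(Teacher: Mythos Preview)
Your proof is correct, and it takes a genuinely different (and more elementary) route than the paper.

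The paper does not give a standalone proof of this statement: it is quoted as a prior result from \cite{vandermeulen19} and then recovered in Section~\ref{ssec:comparison} as the $k=m$ specialization of Theorem~\ref{thm:kdet}. The proof of Theorem~\ref{thm:kdet} (base case $n=4$) embeds everything into a Hilbert space, invokes the identifiability Theorem~\ref{thm:kident} (hence Kruskal's theorem) to force $l>m$, and then runs a delicate analysis of the maximal $k'$ for which $q_1,p_1,\ldots,p_m$ are $k'$-independent, ultimately using Lemma~\ref{lem:kpindpow} to show $q_1^{\otimes 2},p_1^{\otimes 2},\ldots,p_m^{\otimes 2}$ are linearly independent and concluding with the positivity contradiction~\eqref{eqn:detli}.

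Your Stage~1 is exactly the measure-theoretic analogue of \eqref{eqn:detli}: both exploit that a nonnegative combination of fourth powers vanishes only if each term does. Where you diverge is Stage~2. Because you have \emph{full} linear independence of the $\mu_i$ (not merely $k$-independence), you can pick dual functionals $f_i$ with $\mu_j(f_i)=\delta_{ij}$, obtain linear independence of the product measures $\mu_{i_1}\times\cdots\times\mu_{i_4}$, and simply read off coefficients. The ``$(p,p,q,q)$'' coefficient $\sum_j b_j\beta_{j,p}^2\beta_{j,q}^2=0$ then kills all off-diagonal support at once. This sidesteps the Hilbert-space embedding, Kruskal, and the $k'$ bookkeeping entirely.

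What each approach buys: yours is shorter and self-contained for the linearly independent case, and makes transparent \emph{why} $n=4$ suffices (you need the off-diagonal monomial $\beta_{j,p}^2\beta_{j,q}^2$ to be a square). The paper's route is designed to survive when the $\mu_i$ are only $k$-independent with $k<m$; there one cannot expand in a tensor basis of the $\mu_i$, so your Stage~2 collapses and the heavier machinery (Lemmas~\ref{lem:kindpow}--\ref{lem:kpindpow} and the $k'$ argument) becomes necessary. Your duality justification in Stage~1 is fine as stated: the image of $f\mapsto(\mu_1(f),\ldots,\nu_{j_0}(f))$ is a linear subspace, and a proper subspace would yield a nontrivial relation already on indicator functions.
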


\begin{wraptable}{}{9cm}
  \caption{Summary of identifiability results from \cite{vandermeulen19}. $m$ designates the number of mixture components and $n$ the number of samples per group.}\label{tab:oldresults}
  \begin{tabular}{|c|c|c|}
    \hline
    Component assumption& $n$ bound & $n$-ident./det.\\ \hline \hline
    none &$ n\ge 2m-1$& identifiable\\\hline
    none &$ n\ge 2m$& determined\\\hline
    linearly independent &$ n\ge 3$& identifiable\\\hline
    linearly independent &$ n\ge 4$& determined\\\hline
    jointly irreducible &$ n\ge 2$& determined\\\hline
  \end{tabular}
\end{wraptable}
The last row of Table \ref{tab:oldresults} contains a property known as \emph{joint irreducibility} which was introduced in \cite{blanchard14}. A collection of probability measures $\mu_1,\ldots,\mu_m$ is \emph{jointly irreducible} when all probability measures in the linear span of $\mu_1,\ldots,\mu_m$ lie in the convex hull of $\mu_1,\ldots,\mu_m$. We do not use joint irreducibility anywhere else in this paper, however we note that it is a property that is stronger than linear independence. 

For completeness we also include the following lemmas from \cite{vandermeulen19} that demonstrate the unsurprising fact that $k$-identifiability and $k$-determinedness are, in some sense, monotonic. Each lemma encapsulates two statements, one concerning identifiability and one concerning determinedness, which we have combined for brevity.
\begin{lem}\label{lem:ident}
  If a mixture of measures is $n$-identifiable (determined) then it is $q$-identifiable (determined) for all $q>n$.
\end{lem}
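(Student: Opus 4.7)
The plan is to prove both statements simultaneously by reducing $q$-level equalities to $n$-level equalities via marginalization. The key observation is that if $(X_1,\ldots,X_q) \sim \mu^{\times q}$, then the marginal distribution of the first $n$ coordinates $(X_1,\ldots,X_n)$ is exactly $\mu^{\times n}$. So the natural tool is the projection/marginalization operator $M_{q\to n}$ from finite signed measures on $\Omega^{\times q}$ to finite signed measures on $\Omega^{\times n}$, defined on rectangles $A_1 \times \cdots \times A_n$ by $M_{q\to n}(\nu)(A_1 \times \cdots \times A_n) = \nu(A_1 \times \cdots \times A_n \times \Omega \times \cdots \times \Omega)$ and extended by standard measure-theoretic arguments.

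The central computation is that $M_{q\to n}$ is linear on the space of finite signed measures and satisfies $M_{q\to n}(\mu^{\times q}) = \mu^{\times n}$ for every $\mu \in \sD$. Combining these two facts, for any mixture of measures $\sP = \sum_{i=1}^m a_i \delta_{\mu_i}$,
\begin{equation*}
  M_{q\to n}\bigl(V_q(\sP)\bigr) = M_{q\to n}\Bigl(\sum_{i=1}^m a_i \mu_i^{\times q}\Bigr) = \sum_{i=1}^m a_i M_{q\to n}\bigl(\mu_i^{\times q}\bigr) = \sum_{i=1}^m a_i \mu_i^{\times n} = V_n(\sP).
\end{equation*}

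With this identity the two implications follow immediately. For $n$-identifiability: suppose $\sP$ is $n$-identifiable with $m$ components and let $\sP'$ be any mixture of measures with at most $m$ components satisfying $V_q(\sP') = V_q(\sP)$. Applying $M_{q\to n}$ to both sides yields $V_n(\sP') = V_n(\sP)$, and $n$-identifiability forces $\sP' = \sP$, proving $\sP$ is $q$-identifiable. For $n$-determinedness the same argument applies verbatim, but with no cardinality restriction on $\sP'$.

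There is no real obstacle here; the only care needed is the (routine) verification that $M_{q\to n}$ is a well-defined linear map on finite signed measures and that it carries $\mu^{\times q}$ to $\mu^{\times n}$, which is simply Fubini combined with the definition of product measure. Everything else is pure linearity and an application of the hypothesis.
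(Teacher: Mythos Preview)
Your argument is correct and is exactly the intended one: marginalize out the extra $q-n$ coordinates to pass from $V_q$ to $V_n$, then invoke the $n$-level hypothesis. The paper itself does not reprove this lemma (it is quoted from \cite{vandermeulen19}), but the same marginalization device appears explicitly in the appendix proof of Theorem~\ref{thm:nokident}, where the step ``essentially Lemma~\ref{lem:noident}'' is precisely your map $M_{q\to n}$ realized as evaluating the trailing factors on $\Omega^{\times(q-n)}$.
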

\begin{lem} \label{lem:noident}
  If a mixture of measures is not $n$-identifiable (determined) then it is not $q$-identifiable (determined) for any $q<n$.
\end{lem}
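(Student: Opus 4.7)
The plan is to prove the contrapositive directly by projecting the $n$-fold product measures onto $q$ coordinates. More concretely, suppose $\sP = \sum_{i=1}^m a_i \delta_{\mu_i}$ is not $n$-identifiable, so there exists $\sP' = \sum_{j=1}^{m'} b_j \delta_{\nu_j}$ with $m' \le m$, $\sP' \neq \sP$, and $V_n(\sP') = V_n(\sP)$. Consider the coordinate projection $\pi: \Omega^n \to \Omega^q$ onto, say, the first $q$ coordinates. Since each $\mu_i$ and $\nu_j$ is a probability measure, pushing forward a product measure along $\pi$ simply drops the extra factors, i.e.\ $\pi_* \mu_i^{\times n} = \mu_i^{\times q}$ and $\pi_* \nu_j^{\times n} = \nu_j^{\times q}$. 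Applying $\pi_*$ to the identity $V_n(\sP) = V_n(\sP')$ therefore yields
\begin{equation*}
V_q(\sP) \;=\; \sum_{i=1}^m a_i \mu_i^{\times q} \;=\; \sum_{j=1}^{m'} b_j \nu_j^{\times q} \;=\; V_q(\sP').
\end{equation*}

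Next I would verify that $\sP'$ still certifies the failure of $q$-identifiability. By hypothesis $\sP' \neq \sP$ as measures on $\sD$, and this does not depend on the group size. The component count of $\sP'$ is bounded by $m' \le m$, so $\sP'$ satisfies the cardinality requirement in Definition \ref{def:ident}. One might worry that the passage from $\mu_i^{\times n}$ to $\mu_i^{\times q}$ could collapse distinct components and thus change the minimal representation of $\sP'$, but this is irrelevant: Definition \ref{def:ident} only requires the existence of some distinct $\sP'$ with at most $m$ components, and $\sP'$ itself is unchanged as a measure on $\sD$; only its image under $V_q$ is being considered. Hence $\sP$ is not $q$-identifiable.

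The determinedness statement is proved by exactly the same pushforward argument, except that the constraint on the number of components is dropped entirely (see Definition \ref{def:det}), making the argument even more direct: the same witness $\sP' \neq \sP$ satisfies $V_q(\sP') = V_q(\sP)$, so $\sP$ is not $q$-determined.

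I do not anticipate a genuine obstacle here; the content of the lemma is essentially the functoriality of $V_n$ under coordinate projection, and the only point that warrants any care is confirming that the pushforward of a product probability measure by a coordinate projection is the corresponding lower-order product measure, which follows from Fubini on rectangles and a standard monotone class / $\pi$-$\lambda$ argument to extend to all of $\sF^{\otimes q}$.
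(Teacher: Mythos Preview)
Your argument is correct and matches the paper's approach. The paper does not give an independent proof of this lemma---it is imported from \cite{vandermeulen19}---but where the paper invokes it (the proof of Theorem \ref{thm:nokident} in Appendix \ref{app:proofs}, with the remark ``essentially Lemma \ref{lem:noident}'') it carries out precisely your marginalization step, phrased as evaluating the surplus factors on $\Omega^{\times (n-q)}$ rather than as a pushforward by the coordinate projection; these are the same operation. Your side remark about components ``collapsing'' is slightly off-target, since the $\nu_j$ are measures on $\Omega$ and are untouched by the passage from $V_n$ to $V_q$, but you correctly conclude it is irrelevant anyway.
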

Finally \cite{vandermeulen19} Lemma 7.1 showed that if the sample space $\Omega$ is finite, the grouped sample setting is equivalent to a multinomial mixture model where $n$ is the number of trials and $\mu_1,\ldots, \mu_m$ are the categorical distributions for each component. One may consider the grouped sample setting to be a generalized version of multinomial mixture models.

\section{Related Work} \label{sec:previous}
A significant amount of work regarding the grouped sample setting has focused on the setting where $\Omega$ is finite, which is equivalent to a multinomial mixture model. Some of the earliest work on identifiability was done on binomial mixture models with \cite{teicher63} demonstrating that binomial mixture models are identifiable if the number of trials $n$ and the number of mixture components $m$ satisfy $n \ge 2m-1$. These results were extended to multinomial mixture models in \cite{kim1984} and \cite{elmore2003}. \cite{rabani14} and \cite{vandermeulen19} introduced estimators for the multinomial components when the $n\ge 2m-1$ bound is met. Turning to the continuous setting, the paper \cite{ritchie21} introduces a method for recovering mixture components in the grouped sample setting when the components are densities on some Euclidean space. That method is furthermore guaranteed to asymptotically recover the components whenever the mixture model is identifiable.  In \cite{wei20} the authors consider the grouped sample setting where the mixture components come from some parametric class of densities and provide results for identifiability and rates of convergence. Other works have investigated continuous nonparametric mixture models, without assuming the grouped sample setting, by assuming a clustering structure \citep{dan18,aragam20,vankadara21,aragam21,aragam22,kivva22}.

Identifiability with linearly independent components given $n\ge 3$ was first established in \cite{allman09} by way of Kruskal's (Factorization) Theorem \citep{kruskal77}. A spectral algorithm for the estimation of models with linearly independent components can be found in \cite{anandkumar14}. This algorithm also has a nonparametric adaptation \citep{song14}.\sloppy

The grouped sample setting can be considered as a  special case of a finite \emph{exchangable sequence} \citep{kallenberg05}. An (infinite) sequence of random variables $\xi_1,\xi_2,\ldots$ is called \emph{exchangable} if
\begin{equation*}
	\left(\xi_1,\ldots,\xi_m \right)\overset{d}{=}\left( \xi_{k_1},\ldots, \xi_{k_m}\right)
\end{equation*}
for every distinct subsequence $\xi_{k_1},\ldots, \xi_{k_m}$. For an infinite sequence de Finetti's Theorem tells us that (for Borel spaces) one can always decompose the distribution of the sequence to be independent, conditioned on some other random variable in a way akin to \eqref{eqn:grouped-sample}, though this random variable is not necessarily discrete as in \eqref{eqn:grouped-sample}. This theorem does not extend to finite sequences, but there have been works investigating the grouped sample setting for continuous mixtures. In \citet{vinayak19} the authors present rates for estimating a continuous version of $\sP$ in the context of binomial mixture models. 

A generalization of Kruskal's Theorem for $d$-way arrays can be found can be found in \cite{sidiropoulos00} and is related to the techniques we use in the proof of Theorem \ref{thm:kident}. The proof technique we use in Theorem \ref{thm:kdet} is completely novel, so far as we know. As far as the contributions of this paper are concerned, Theorems \ref{thm:kident}, \ref{thm:nokident}, and \ref{thm:nokdet} are natural extensions of the grouped sample results using techniques from \cite{vandermeulen19} to the independence setting considered in \cite{sidiropoulos00}. The determinedness result in Theorem \ref{thm:kdet} required the development of a new proof strategy.\sloppy

\section{Main Results}\label{sec:main}
In this section we present the main results of this paper. They are related to a property which we call \emph{$k$-independence}.
\begin{defin}
  A sequence of vectors $x_1,\ldots,x_m$ is called \emph{$k$-independent} if every subsequence $x_{i_1},\ldots,x_{i_k}$ containing $k$ vectors is linearly independent.
\end{defin}
The concept of \emph{$k$-independence} is simply a generalization of \emph{Kruskal rank} \citep{kruskal77} to vector spaces. We define $k$-independence using a \emph{sequence} rather than a \emph{set} of vectors so as to relate it to a matrix rank, since a matrix can have repeated columns. When $x_1,\ldots,x_m$ are distinct (as will be the case in our main theorems) we can define $k$-independence simply using sets and subsets.  We now present the main results of this paper.

\begin{thm}\label{thm:kident}
  Let $m\ge2$. If $\sP = \sum_{i=1}^m a_i \delta_{\mu_i}$ is a mixture of measures where $\mu_1,\ldots,\mu_m$ are $k$-independent then $\sP$ is $n$-identifiable if $2m-1\le (k-1)n$.
\end{thm}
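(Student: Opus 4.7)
The plan is to reduce identifiability to a finite-dimensional symmetric $n$-way tensor decomposition problem and then invoke the $n$-way generalization of Kruskal's uniqueness theorem due to Sidiropoulos and Bro, whose sufficient condition aligns exactly with $2m-1 \le (k-1)n$.

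Suppose for contradiction that $\sP' = \sum_{j=1}^{m'} b_j \delta_{\nu_j}$ is a mixture with $m' \le m$, $\sP' \ne \sP$, and $V_n(\sP') = V_n(\sP)$. Let $W$ be the (finite-dimensional) linear span of $\{\mu_i\}_{i=1}^m \cup \{\nu_j\}_{j=1}^{m'}$ inside the space of finite signed measures on $(\Omega, \sF)$. Because $W$ is finite dimensional and bounded measurable functions separate signed measures, one can pick finitely many bounded measurable $f_1, \ldots, f_D$, with $f_1 \equiv 1$, such that the linear map $\phi\colon W \to \rn^D$, $\phi(\gamma) = (\gamma(f_1), \ldots, \gamma(f_D))$, is injective on $W$. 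Linearity plus injectivity transfer $k$-independence to the images $\phi(\mu_1), \ldots, \phi(\mu_m) \in \rn^D$. Integrating the measure identity $V_n(\sP) = V_n(\sP')$ against each product $f_{i_1} \otimes \cdots \otimes f_{i_n}$ for $(i_1, \ldots, i_n) \in [D]^n$ yields the finite-dimensional symmetric $n$-way tensor identity
\begin{equation*}
\sum_{i=1}^m a_i\, \phi(\mu_i)^{\otimes n} \;=\; \sum_{j=1}^{m'} b_j\, \phi(\nu_j)^{\otimes n}
\end{equation*}
inside $(\rn^D)^{\otimes n}$.

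Next, regard both sides as asymmetric $n$-way CP decompositions whose factor matrix is the same across all $n$ modes. The left decomposition has Kruskal rank $\ge k$ in each of the $n$ modes, so the sum of its mode Kruskal ranks is $\ge nk$. The hypothesis $2m-1 \le (k-1)n$ rearranges to $nk \ge 2m + n - 1$, which is exactly the Sidiropoulos-Bro sufficient condition for uniqueness of a CP decomposition of rank at most $m$. Uniqueness then forces $m' = m$ and, after a permutation, $a_i\,\phi(\mu_i)^{\otimes n} = b_i\,\phi(\nu_i)^{\otimes n}$ for every $i$. Because $f_1 \equiv 1$, both $\phi(\mu_i)$ and $\phi(\nu_i)$ have first coordinate equal to $1$; reading off the $(1, \ldots, 1)$ entry of these matched rank-one tensors forces $a_i = b_i$, after which the normalization pins down the sign/root-of-unity ambiguity and gives $\phi(\mu_i) = \phi(\nu_i)$. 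Injectivity of $\phi$ on $W$ finally lifts to $\mu_i = \nu_i$, contradicting $\sP \ne \sP'$.

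The main obstacle is invoking the $n$-way Kruskal/Sidiropoulos-Bro uniqueness theorem in the asymmetric form, where only one of the two decompositions is a priori known to have large Kruskal rank in every mode, and carefully matching the bound $nk \ge 2m + n - 1$ with the stated hypothesis $2m-1\le (k-1)n$. The finite-dimensional reduction and the lift back via $\phi$ are essentially bookkeeping, though the Step~1 choice of test functions must be made carefully to both separate $W$ and include the constant function that resolves the rank-one-tensor scaling ambiguity in Step~3.
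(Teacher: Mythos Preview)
Your approach is correct and takes a genuinely different route from the paper's.

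The paper embeds the measures as densities in an $L^2$ space via Radon--Nikodym derivatives, shows $n\ge 3$, splits $n=n_1+n_2+n_3$, and applies the classical \emph{three}-way Kruskal theorem to the grouped factors $p_i^{\otimes n_1}\otimes p_i^{\otimes n_2}\otimes p_i^{\otimes n_3}$. The needed Kruskal ranks of the grouped factors come from a separate lemma (Lemma~\ref{lem:kindpow}) stating that tensor powers of $k$-independent vectors are $\min((k-1)n'+1,m)$-independent, and a case split on $n\bmod 3$ is then used to verify $k_1+k_2+k_3\ge 2m+2$. The scaling ambiguity is killed by integrating against the dominating measure.

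You instead embed into $\rn^D$ by evaluating against finitely many test functions (with $f_1\equiv 1$ included to pin down scaling), and invoke the $n$-way Sidiropoulos--Bro generalization directly, so that the condition $\sum_{j=1}^n k_j\ge 2m+(n-1)$ with each $k_j\ge k$ becomes exactly $n(k-1)\ge 2m-1$. This is cleaner: it sidesteps both the tensor-power Kruskal-rank lemma and the $n\bmod 3$ case analysis, at the price of relying on the less commonly cited $n$-way theorem. The paper in fact mentions Sidiropoulos--Bro in its related-work section as ``related to the techniques'' used here but elects to reduce to the 3-way version. The one point you correctly flag as needing care is the ``asymmetric'' form where only the $\sP$-side is known to satisfy the Kruskal-rank condition and the $\sQ$-side has $m'\le m$ terms: the standard Sidiropoulos--Bro statement assumes both decompositions have the same number of terms. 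This is not a real obstruction --- either cite a formulation that allows $m'\le m$ (as the paper's 3-way Theorem~\ref{thm:kruskalhilb} does), or observe that the Kruskal-type condition already forces the tensor rank to be exactly $m$, precluding $m'<m$ --- but it should be stated explicitly rather than absorbed into ``bookkeeping.''
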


\begin{thm}\label{thm:kdet}
  Let $m \ge 2$. If $\sP = \sum_{i=1}^m a_i \delta_{\mu_i}$ is a mixture of measures where $\mu_1,\ldots,\mu_m$ are $k$-independent then $\sP$ is $n$-determined for even-valued $n$ if $2m-2\le (k-1)(n-1)$.
\end{thm}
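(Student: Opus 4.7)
The plan is to suppose there exists a mixture $\sP' = \sum_{j=1}^{m'} b_j \delta_{\nu_j}$ with $V_n(\sP') = V_n(\sP)$ and deduce $\sP' = \sP$. Since the hypothesis $(k-1)(n-1) \ge 2m-2$ together with $k \ge 2$ implies $(k-1)n \ge 2m-1$, Theorem~\ref{thm:kident} gives $n$-identifiability of $\sP$, so we may assume throughout that $m' > m$.

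The key structural step is to exploit that $n = 2r$ is even. I would view $T := V_n(\sP) = \sum_{i=1}^m a_i\,\mu_i^{\otimes r} \otimes \mu_i^{\otimes r}$ as a positive-semidefinite symmetric bilinear form on signed measures over $\Omega^r$. A Kruskal-rank bound for symmetric tensor powers (if $v_1,\ldots,v_m$ are $k$-independent then $v_1^{\otimes r},\ldots,v_m^{\otimes r}$ have Kruskal rank at least $\min(m,\,r(k-1)+1)$, the symmetric analogue of the Sidiropoulos bound), combined with the inequality $r(k-1) \ge m-1$ that follows immediately from our hypothesis, shows that $\mu_1^{\otimes r},\ldots,\mu_m^{\otimes r}$ are linearly independent. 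Consequently $T$ has matrix rank exactly $m$ with range equal to $\spn(\mu_1^{\otimes r},\ldots,\mu_m^{\otimes r})$; the PSD structure of the alternate decomposition $T = \sum_j b_j\,\nu_j^{\otimes r}\otimes\nu_j^{\otimes r}$ then forces every $\nu_j^{\otimes r}$ into this same $m$-dimensional subspace.

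The heart of the argument — and the step I expect to be the genuine obstacle — is the lemma: the only rank-one symmetric tensors $\nu^{\otimes r}$ (for $\nu$ a probability measure) lying in $\spn(\mu_1^{\otimes r},\ldots,\mu_m^{\otimes r})$ are the $\mu_i^{\otimes r}$ themselves. Testing against product functionals $\phi^{\otimes r}$ converts the membership $\nu^{\otimes r} = \sum_i c_i \mu_i^{\otimes r}$ into the polynomial identity $\nu(\phi)^r = \sum_i c_i \mu_i(\phi)^r$, and parametrizing $\phi$ through a basis of $\spn(\mu_1,\ldots,\mu_m)$ reduces it to a Waring-type identity $\alpha^{\otimes r} = \sum_i c_i \psi_i^{\otimes r}$ in $\rn^d$, with $\psi_1,\ldots,\psi_m$ still $k$-independent. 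The case $\nu \notin \spn(\mu_i)$ is dispatched by choosing a $\phi$ that annihilates all $\mu_i$ but not $\nu$. For $\nu \in \spn(\mu_i)$ I would split on whether $\alpha$ lies in the span of fewer than $k$ of the $\psi_i$: in the generic subcase the extra slack in the hypothesis beyond linear independence upgrades the tensor-power Kruskal bound to $r(k-1) \ge m$, which makes $\{\alpha^{\otimes r},\psi_1^{\otimes r},\ldots,\psi_m^{\otimes r}\}$ jointly linearly independent and contradicts the nontrivial relation unless $\alpha$ is a scalar multiple of some $\psi_j$. The degenerate subcase $\alpha \in \spn(\psi_{i_1},\ldots,\psi_{i_s})$ with $s<k$ is where the work lies: I would project the identity onto the symmetric tensors over $V = \spn(\psi_{i_1},\ldots,\psi_{i_s})$ and use the vanishing of the components whose support falls outside $V$ (which must be zero on the left but are controlled on the right by $k$-independence of the $\psi_i$) to force the coefficients $c_i$ attached to $\psi_i \notin V$ to vanish, collapsing to a smaller instance that can be handled by direct expansion in the symmetric basis.

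With the lemma in hand every $\nu_j$ coincides with some $\mu_i$, so $\sP'$ is supported on $\{\mu_1,\ldots,\mu_m\}$; linear independence of $\mu_1^{\otimes n},\ldots,\mu_m^{\otimes n}$ then matches the weights term-by-term and delivers $\sP' = \sP$.
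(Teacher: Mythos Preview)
Your route differs from the paper's and the opening moves are sound: the reduction to $m'>m$ via Theorem~\ref{thm:kident} is correct, and the PSD/rank argument forcing each $\nu_j^{\otimes r}$ into $\spn(\mu_1^{\otimes r},\ldots,\mu_m^{\otimes r})$ goes through (in fact the hypothesis gives $r(k-1)\ge m$, not merely $\ge m-1$, since $2r(k-1)\ge 2m-2+(k-1)$ and $k\ge 2$). The case $\nu\notin\spn(\mu_i)$ and your ``generic'' subcase are also handled correctly.

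The gap is precisely where you locate it. In the degenerate subcase your projection sketch does not work as written: projecting onto $V^{\otimes r}$ does not annihilate the terms $c_i\psi_i^{\otimes r}$ for $\psi_i\notin V$ (those $\psi_i$ typically have nonzero $V$-component), and contracting instead with functionals $\phi\in V^\perp$ gives $0=\sum_{i>s}c_i\,\phi(\psi_i)^r$, but the projections $P_{V^\perp}\psi_i$ for $i>s$ are only $(k-s)$-independent, so Lemma~\ref{lem:kindpow} guarantees linear independence of their $r$th powers only when $r(k-s-1)+1\ge m-s$. This fails exactly in the critical window (roughly $2k-m\le s\le k-1$, nonempty whenever $k<m$), so ``forcing the coefficients $c_i$ attached to $\psi_i\notin V$ to vanish'' does not follow from $k$-independence and the single relation $\alpha^{\otimes r}=\sum_i c_i\psi_i^{\otimes r}$ alone. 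Your ``collapse to a smaller instance'' is not justified.

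The paper never reduces to your standalone lemma; it keeps the full identity $\sum_i a_i p_i^{\otimes n}=\sum_j b_j q_j^{\otimes n}$ and uses the positivity of all the $b_j$, which you discard in passing to $\nu^{\otimes r}\in\spn(\mu_i^{\otimes r})$. For the base case $n=4$, letting $k'$ be the joint Kruskal parameter of $q_1,p_1,\ldots,p_m$, the paper proves $k'\ge 2k-m+1$ by constructing dual witnesses $z,z'$ (each orthogonal to $k-1$ carefully chosen $p_i$) so that the left side pairs to zero against $z^{\otimes 2}\otimes z'^{\otimes 2}$ while the right side is a sum of nonnegative terms with the $q_1$ term strictly positive --- an argument that needs the positive $b_j$'s over the whole sum, not just membership of one $q_1^{\otimes 2}$ in a span. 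That lower bound then feeds into Lemma~\ref{lem:kpindpow} to yield linear independence of $q_1^{\otimes 2},p_1^{\otimes 2},\ldots,p_m^{\otimes 2}$ and the contradiction. For $n>4$ the paper does not attempt your direct argument at all; it inducts in steps of two, contracting against $z^{\otimes 2}$ (for $z$ orthogonal to $k-1$ of the $p_i$) to drop both $n$ and $m$ by the right amounts and land on the inductive hypothesis.
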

For Theorem \ref{thm:kident} we must omit the case where $m=1$ since this would imply that $k=1$ which results in the inequality ``$1 \le 0$'' in the theorem statement. Note that any mixture containing only one component is trivially $1$-identifiable, which is accounted for by row one in Table \ref{tab:oldresults}.

The following theorem demonstrates that Theorem \ref{thm:kident} cannot be improved for any values of $m,n$, or $k$, not satisfying $2m-1\le (k-1)n$.
\begin{thm}\label{thm:nokident}
  For all $m\ge k\ge2$ and $n$  with $ 2m-1 > (k-1)n$ there exists a mixture of measures $\sP = \sum_{i=1}^m a_i \delta_{\mu_i}$ where $\mu_1,\ldots,\mu_m$ are $k$-independent and $\sP$ is not $n$-identifiable.
\end{thm}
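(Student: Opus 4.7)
The strategy is to build a counterexample by ``lifting'' one from the no-independence-assumption setting (row~1 of Table~\ref{tab:oldresults}). The arithmetic of the hypothesis $2m-1>(k-1)n$ is exactly what allows a $(k-1)$-fold tensor-power lift of each component to absorb the extra factor $k-1$ between the two settings. The only nontrivial step in the plan is verifying that the lifted components are $k$-independent.

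For the base counterexample I would use the matching lower bound of row~1 of Table~\ref{tab:oldresults} (e.g., the classical Teicher binomial construction) together with Lemma~\ref{lem:noident}: since $(k-1)n\le 2m-2$, there exist $m$ Bernoulli distributions $\nu_1,\ldots,\nu_m$ with pairwise distinct parameters $p_1,\ldots,p_m\in(0,1)$ and positive weights $a_i$ summing to one such that $\sQ=\sum_{i=1}^m a_i\delta_{\nu_i}$ is not $(k-1)n$-identifiable. Thus some $\sQ'\neq\sQ$ with at most $m$ components satisfies $V_{(k-1)n}(\sQ)=V_{(k-1)n}(\sQ')$. I would then lift by defining $\mu_i:=\nu_i^{\times(k-1)}$, a probability measure on $\{0,1\}^{k-1}$, setting $\sP=\sum_{i=1}^m a_i\delta_{\mu_i}$, and lifting the components of $\sQ'$ analogously to form $\sP'$. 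Associativity of product measures gives $\mu_i^{\times n}=\nu_i^{\times(k-1)n}$, so $V_n(\sP)=V_{(k-1)n}(\sQ)=V_{(k-1)n}(\sQ')=V_n(\sP')$, and the injectivity of $\nu\mapsto\nu^{\times(k-1)}$ on probability measures (one recovers $\nu$ as a one-dimensional marginal) forces $\sP\neq\sP'$, so $\sP$ fails to be $n$-identifiable.

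The main obstacle is showing that $\mu_1,\ldots,\mu_m$ are $k$-independent. Each $\mu_i$ lies in the $k$-dimensional symmetric subspace of measures on $\{0,1\}^{k-1}$, with coordinate at a word $\omega$ equal to $p_i^{|\omega|}(1-p_i)^{k-1-|\omega|}$ depending only on the Hamming weight $|\omega|$. For any $k$ of the $\mu_i$, selecting one coordinate per weight class $s\in\{0,\ldots,k-1\}$ gives a $k\times k$ matrix $[p_i^s(1-p_i)^{k-1-s}]_{i,s}$; pulling $(1-p_i)^{k-1}$ out of each row reduces its determinant to a Vandermonde determinant in the distinct positive numbers $t_i:=p_i/(1-p_i)$, which is nonzero. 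Hence every $k$-subset of $\{\mu_1,\ldots,\mu_m\}$ is linearly independent, which is exactly what the construction needs.
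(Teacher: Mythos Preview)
Your proposal is correct and uses the same lifting idea as the paper: start from a mixture that is not $(k-1)n$-identifiable and replace each component $\nu$ by $\nu^{\times(k-1)}$, so that $V_n$ of the lifted mixture equals $V_{(k-1)n}$ of the original. The one substantive difference is how $k$-independence of the lifted components is verified. The paper invokes Lemma~\ref{lem:kindpow}: any collection of distinct probability measures is $2$-independent, so their $(k-1)$-fold tensor powers are $\min((k-1)(2-1)+1,m)=k$-independent, and this works for \emph{any} base counterexample coming from \cite{vandermeulen19}, Theorem~4.2. You instead pin the base counterexample to Bernoulli measures with distinct parameters in $(0,1)$ and run an explicit Vandermonde computation on the binomial coordinates; this is more elementary and self-contained, at the cost of relying on the (true, classical) fact that the row-1 lower bound can indeed be witnessed by such Bernoulli measures.
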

For determinedness we have a similar bound, but it is a bit loose.
\begin{thm}\label{thm:nokdet}
  For all $m\ge k\ge2$ and $n$ with $ 2m > (k-1)n$ there exists a mixture of measures $\sP = \sum_{i=1}^m a_i \delta_{\mu_i}$ where $\mu_1,\ldots,\mu_m$ are $k$-independent and $\sP$ is not $n$-determined.
\end{thm}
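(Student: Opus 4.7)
The plan is to build an explicit counterexample on $\Omega=\{0,1,\ldots,k-1\}$ using the Binomial/Bernstein curve $\mu(t)\in\Delta^{k-1}$ defined by $\mu(t)_j=\binom{k-1}{j}t^j(1-t)^{k-1-j}$. A standard Vandermonde determinant calculation (pull $\binom{k-1}{\ell}$ from column $\ell$ and $(1-t_i)^{k-1}$ from row $i$, and pass to $r_i=t_i/(1-t_i)$) shows that any $\mu(t_1),\ldots,\mu(t_k)$ with distinct $t_i\in(0,1)$ is linearly independent, so any $m$-tuple along this curve is automatically $k$-independent.

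Setting $N:=(k-1)n$ and $S:=\sum_\ell j_\ell$, direct computation gives
\[
V_n\left(\sum_i a_i\,\delta_{\mu(t_i)}\right)_{(j_1,\ldots,j_n)}=\prod_\ell\binom{k-1}{j_\ell}\cdot\sum_i a_i\,t_i^{S}(1-t_i)^{N-S}.
\]
Because the Bernstein polynomials $\{t^S(1-t)^{N-S}\}_{S=0}^{N}$ span all polynomials of degree at most $N$, equality of two such $V_n$-tensors (for mixtures built from this curve) is equivalent to equality of the first $N$ moments of the underlying atomic measures on $(0,1)$. The theorem thus reduces to producing, under $2m>N$, two probability measures $\nu\ne\nu'$ on $(0,1)$ with $\nu$ supported on exactly $m$ points and sharing the first $N$ moments.

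To construct such a pair, fix distinct $t_1,\ldots,t_m\in(0,1)$ and positive weights $a_i$ summing to $1$ for $\nu$, pick $s_*\in(0,1)\setminus\{t_1,\ldots,t_m\}$, and look for a perturbation
\[
\nu'_\epsilon=\sum_{i=1}^{m}\widetilde a_i(\epsilon)\,\delta_{\widetilde t_i(\epsilon)}+\epsilon\,\delta_{s_*},\qquad \widetilde a_i(0)=a_i,\ \widetilde t_i(0)=t_i,
\]
solving $\sum_i\widetilde a_i(\epsilon)\widetilde t_i(\epsilon)^\ell+\epsilon\,s_*^\ell=\sum_i a_i t_i^\ell$ for $\ell=0,1,\ldots,N$. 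The Jacobian at $\epsilon=0$ in the $2m$ variables $(\widetilde a_i,\widetilde t_i)$ is the $(N+1)\times 2m$ confluent Vandermonde matrix at the nodes $t_1,\ldots,t_m$ each of multiplicity $2$, which has full row rank $N+1$ since $2m\ge N+1$ (it sits as the top $N+1$ rows of the classical invertible $2m\times 2m$ confluent Vandermonde). The submersion form of the implicit function theorem then supplies a smooth branch $\epsilon\mapsto(\widetilde a_i(\epsilon),\widetilde t_i(\epsilon))$ of solutions for small $\epsilon\ge 0$; by continuity the $\widetilde a_i(\epsilon)$ remain positive and the $\widetilde t_i(\epsilon)$ remain distinct and disjoint from $s_*$ in $(0,1)$ for all sufficiently small $\epsilon>0$. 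Then $\nu'_\epsilon$ is a probability measure with $m+1$ atoms sharing $\nu$'s first $N$ moments, so $\nu'_\epsilon\neq\nu$. Taking $\sP=\sum_i a_i\delta_{\mu(t_i)}$ and $\sP'=\sum_i\widetilde a_i(\epsilon)\delta_{\mu(\widetilde t_i(\epsilon))}+\epsilon\,\delta_{\mu(s_*)}$ then yields $V_n(\sP)=V_n(\sP')$ with $\sP'\neq\sP$ and $\sP$'s components $k$-independent.

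The main obstacle I anticipate is establishing the full row rank of the confluent Vandermonde Jacobian and justifying the implicit function theorem step; once that is in place, positivity, distinctness, and the containment $\widetilde t_i(\epsilon)\in(0,1)\setminus\{s_*\}$ follow by continuity, and injectivity of the Binomial parametrization $\mu:(0,1)\to\Delta^{k-1}$ ensures that the $m+1$ mixture components of $\sP'$ are pairwise distinct, so that $\sP'\neq\sP$ as mixtures.
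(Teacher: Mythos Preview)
Your argument is correct, and it is genuinely different from the paper's. The paper does not build an explicit example at all: it invokes the earlier result (\cite{vandermeulen19}, Theorem~4.4) that for every $m$ there is an $m$-component mixture which is not $(2m-1)$-determined, marginalizes from $2m-1$ down to $(k-1)n$ using $2m>(k-1)n$, and then replaces each component $\mu_i$ by its $(k-1)$-fold self-product $\mu_i^{\times(k-1)}$; since distinct probability measures are $2$-independent, Lemma~\ref{lem:kindpow} makes the products $k$-independent, and $V_n$ of the product mixtures agree. Your route instead produces a concrete counterexample on the minimal sample space $\Omega=\{0,\ldots,k-1\}$, reducing $V_n$-equality along the binomial curve to matching the first $N=(k-1)n$ moments of atomic measures on $(0,1)$, and then manufacturing a second atomic measure with one extra atom via the implicit function theorem; the key rank step is exactly as you say, since the $2m\times 2m$ confluent Vandermonde at distinct nodes (each of multiplicity two) is nonsingular, so its top $N+1\le 2m$ rows are independent, and the $a_i$-scaling of the derivative columns does not affect rank. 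The paper's proof is shorter and modular but non-constructive and lands on a product sample space $\Omega^{k-1}$, whereas yours is self-contained, explicit, and shows that $|\Omega|=k$ already suffices, while also exposing the mechanism (a truncated moment problem) quite transparently.
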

\subsection{Comparison to Previous Results}\label{ssec:comparison}
The results in Section \ref{sec:main} are quite general and contain four of the five bounds from \cite{vandermeulen19} as special cases. Since any pair of distinct probability measures are not colinear, it follows that the components of any mixture of measures with at least two mixture components are $2$-independent. Setting $k=2$ in Theorems \ref{thm:kident} and \ref{thm:kdet} gives us the first two bounds from Table \ref{tab:oldresults} (noting that $n$ is always even for the determinedness result).\sloppy

If a collection of $m$ vectors are linearly independent we have that they are $m$-independent. Setting $k=m$ in Theorem \ref{thm:kident} we have
\begin{align*}
	2m-1 \le (m-1)n 
	\iff \frac{2m-1}{m-1} \le n
	\iff \frac{2m-2+1}{m-1} \le n
	\iff \frac{2m-2}{m-1}+ \frac{1}{m-1} \le n,
\end{align*}
with the minimal $n$ satisfying this being $3$ which yields row 3 in Table \ref{tab:oldresults}. The analogous determinedness bound on row 4 can similarly be derived from Theorem \ref{thm:kdet},

\begin{align*}
  2m-2 \le (m-1)(n-1) \Rightarrow 2 \le n-1 \iff 3 \le n,
\end{align*} 
and the smallest even-valued $n$ satisfying this bound is $4$. 
As a final point we remark that, in contrast to previous results, Theorems \ref{thm:kident} and \ref{thm:kdet} imply that, when $n=3$ or $n=4$ respectively, it is possible to have identifiability/determinedness without linearly independent components; for example by setting $n=4$, $k=7$, and $m=10$ for the determinedness case.
\subsection{Applications} \label{sec:app}
The grouped sample setting occurs naturally in many problem settings including group anomaly detection \citep{muandet13}, transfer learning \citep{blanchard11}, and distribution regression/classification \citep{poczos13,szabo14}. In these settings one has access to groups of samples $\bX_1,\ldots,\bX_N$ with $\bX_i = \left(X_{i,1},\ldots,X_{i,n}\right)$. Mathematical analysis of techniques in this setting typically assume $n\to \infty$. The study of such problems for fixed $n$ is less explored, and the results here can help give some intuition for the learnability of this setting. 

Our results are particularly relevant when samples $X_{i,j}$ lie in a finite sample space, $\left|\Omega\right|= d< \infty$. When $\left|\Omega \right| = \infty$ one could convert samples to a finite sample space by assigning them to a set of $d$ prototypes. In this setting it is natural to assume that the mixture components are $d$-independent due to the following proposition.
\begin{prop}\label{prop:liprob}
  Let $\Psi$ be a measure which is absolutely continuous to the uniform measure on the probability simplex $\Delta^{d-1}$ and let $\Gamma_1,\ldots, \Gamma_d \simiid \Psi$. Then $\Gamma_1,\ldots, \Gamma_d$ are linearly independent with probability one.
\end{prop}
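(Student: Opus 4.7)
The plan is to proceed by induction on $j$, showing that $\Gamma_1,\ldots,\Gamma_j$ are linearly independent with probability one for each $j=1,\ldots,d$. The base case $j=1$ is immediate, since each $\Gamma_i$ is a probability measure on a finite space and therefore corresponds to a nonzero vector in $\Delta^{d-1}\subseteq\rn^d$. For the inductive step, I would condition on $\Gamma_1,\ldots,\Gamma_{j-1}$. By the inductive hypothesis, almost surely $V\triangleq \spn(\Gamma_1,\ldots,\Gamma_{j-1})$ is a $(j-1)$-dimensional linear subspace of $\rn^d$. Since $\Gamma_j$ is independent of $\Gamma_1,\ldots,\Gamma_{j-1}$ and $\Psi\ll U$, where $U$ denotes the uniform measure on $\Delta^{d-1}$, it suffices to prove a purely deterministic statement: for any linear subspace $V\subseteq \rn^d$ with $\dim V \le d-1$, the set $V\cap \Delta^{d-1}$ has $U$-measure zero.

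For this geometric fact, recall that $\Delta^{d-1}$ lies in the affine hyperplane $H\triangleq \{x\in\rn^d:\sum_i x_i = 1\}$, which does not contain the origin. For $j\le d-1$ the intersection $V\cap H$ is an affine set of dimension at most $j-2<d-2$, and is therefore $U$-null. The only case requiring care is $j=d$, where $V$ and $H$ both have dimension $d-1$. Here, $V$ is a linear subspace (so $0\in V$), whereas $0\notin H$, so $V\neq H$; a standard dimension-count then gives that $V\cap H$ is either empty or an affine subspace of dimension exactly $d-2$. In either case,
\begin{equation*}
  V\cap \Delta^{d-1} \subseteq V\cap H
\end{equation*}
sits inside an affine subspace of $H$ of dimension strictly less than $d-1$, hence is $U$-null.

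Combining these pieces, by absolute continuity $\Psi(V\cap \Delta^{d-1}) = 0$ almost surely over the realization of $V$, so $\Pr(\Gamma_j \in \spn(\Gamma_1,\ldots,\Gamma_{j-1}))=0$ by conditioning (or equivalently Fubini). Induction on $j$ up to $j=d$ then yields the proposition. I do not anticipate a serious obstacle: the only subtle point is verifying at the terminal step $j=d$ that a $(d-1)$-dimensional \emph{linear} subspace of $\rn^d$ cannot swallow the entire $(d-1)$-dimensional \emph{affine} simplex, and this is handled by the observation that $H$ does not pass through the origin. Everything else is a routine conditioning argument leveraging absolute continuity with respect to $U$.
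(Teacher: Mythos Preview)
Your proposal is correct and takes a genuinely different route from the paper's. The paper argues by contradiction: assuming linear dependence occurs with positive probability, it observes that any relation $\Gamma_1=\sum_{i\ge2}\alpha_i\Gamma_i$ among probability vectors forces $\sum_i\alpha_i=1$, so linear dependence on $\Delta^{d-1}$ coincides with \emph{affine} dependence. It then pushes $\Psi$ forward through an affine bijection $f:H\to\rn^{d-1}$ to obtain an absolutely continuous law on $\rn^{d-1}$, and appeals to the classical fact (cited from Devroye) that $d$ i.i.d.\ draws from such a law are in general position almost surely; the affine images $\tGamma_i$ would fail this, a contradiction.

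Your argument instead stays in $\rn^d$ throughout: you condition sequentially and reduce to the deterministic claim that any proper linear subspace $V$ meets $\Delta^{d-1}$ in a $U$-null set, which you verify directly from $0\in V\setminus H$. This is more self-contained---no external citation is needed and you avoid the affine change of coordinates---while the paper's reduction makes the connection to ``general position'' explicit and offloads the measure-zero computation to a known result. Both approaches ultimately rest on the same geometric observation (the simplex spans an affine hyperplane not passing through the origin, so linear and affine dependence coincide there); they just package it differently.
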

This fact is particularly relevant for topic modeling where $d$, the number of words in a vocabulary, can be large and estimation can be difficult. A straightforward way to fix this is to assign words to $d'<d$ clusters, perhaps using a vector word embedding \citep{mikolov2013}, thereby coarsening the event space. To recover $m$ topics we would should have $d'\le m$ and satisfy $2m-1 \le (d'-1)n$ where $n$ is the number of words per document. To test whether a corpus could potentially contain more topics than a proposed topic model with $m$ topics, we would need that $2m-2\le (d'-1)(n-1)$. These results are also useful for other discrete clustering problems \citep{portela08}.

\section{Proofs}
This section contains proofs of the results in Section \ref{sec:main} and supporting lemmas. Proofs omitted in this section can be found in Appendix \ref{app:proofs}. The $\prod$ symbol represents tensor product when applied to elements of a Hilbert space. For a natural number $N$, $\left[N\right]$ is defined to be $\left\{1,2,\ldots,N\right\}$. To streamline the presentation of our main theorems we first introduce the mathematical tools we will be using.

The following lemma is not particularly novel, but we will be using it quite extensively without reference so we include a statement of it here.
\begin{lem} \label{lem:iplinind}
  Let $x_1,\ldots, x_m$ nonzero be vectors in an inner product space. Then $x_1,\ldots,x_m$ are linearly independent iff there exist vectors $z_1,\ldots ,z_m$ such that $\left<x_i, z_i\right>\neq 0$ for all $i$ and $\left<x_i, z_j\right> = 0$ for all $i\neq j$.
\end{lem}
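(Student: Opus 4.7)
The plan is to prove the two directions of this biconditional separately, via the standard dual-basis construction.

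For the ``if'' direction, I would assume such $z_1,\ldots,z_m$ exist and consider an arbitrary linear dependence $\sum_{i=1}^m c_i x_i = 0$. Pairing this equation against a fixed $z_j$ collapses the sum to $c_j \langle x_j, z_j\rangle = 0$, and since $\langle x_j, z_j\rangle \neq 0$ by hypothesis, $c_j = 0$. Running this argument over all $j\in[m]$ gives linear independence.

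For the ``only if'' direction, the natural construction is to take $z_j$ to be a nonzero vector orthogonal to every $x_i$ with $i\neq j$. Concretely, I would restrict attention to the finite-dimensional subspace $V = \spn(x_1,\ldots,x_m)$, and for each $j\in[m]$ define $W_j = \spn(x_i : i\neq j)$, which is a proper subspace of $V$ precisely because the $x_i$ are linearly independent. Decomposing $x_j$ orthogonally inside $V$ as $x_j = w_j + u_j$ with $w_j\in W_j$ and $u_j\in W_j^\perp\cap V$, the component $u_j$ is nonzero, is orthogonal to each $x_i$ with $i\neq j$, and satisfies $\langle x_j, u_j\rangle = \|u_j\|^2 > 0$. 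Setting $z_j := u_j$ yields the desired family.

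The only subtlety is that the ambient inner product space is not assumed to be complete, so one cannot directly invoke the orthogonal projection theorem onto a closed subspace in full generality. Restricting to the finite-dimensional $V$ sidesteps this: every subspace of a finite-dimensional inner product space admits an orthogonal complement, so the decomposition $x_j = w_j + u_j$ is available without any Hilbert-space machinery. I expect this to be the only place where care is needed; the rest is formal.
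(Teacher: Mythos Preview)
Your proof is correct and essentially matches the paper's: there the forward direction is dispatched by a one-line appeal to Gram--Schmidt (which amounts to your orthogonal-complement construction inside the finite-dimensional span), and the reverse direction is argued by contradiction via the same pairing-with-$z_j$ idea that you carry out directly. Your explicit remark that restricting to $V=\spn(x_1,\ldots,x_m)$ sidesteps any completeness assumption is a detail the paper leaves implicit.
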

The next lemma serves as something of a workhorse in our proofs.
\begin{lem} \label{lem:kindpow}
  Let $x_1,\ldots, x_m$ be vectors in a Hilbert space which are $k$-independent with $k\ge2$. Then $x_1^{\otimes n},\ldots,x_m^{\otimes n}$ are $\min \left(n\left(k-1\right)+1,m\right)$-independent.
\end{lem}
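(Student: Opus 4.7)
The plan is to induct on $n$. The base case $n=1$ is immediate, since $\min(k,m)$-independence of $x_1,\ldots,x_m$ is exactly what $k$-independence gives (in the meaningful regime $m\ge k$; the case $m<k$ is vacuous). For the inductive step, assume the result for $n$, and set $s' = \min\bigl((n+1)(k-1)+1,\,m\bigr)$. It suffices, after relabeling, to consider an arbitrary linear combination $\sum_{i=1}^{s'} c_i\, x_i^{\otimes(n+1)} = 0$ and show that each $c_j = 0$.

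The central trick is to ``contract'' one of the $n+1$ tensor factors against a cleverly chosen test vector $z$, reducing to the case of $n$-fold tensor powers where the inductive hypothesis applies. Fix $j \in [s']$ and choose any index set $I \subseteq [s']\setminus\{j\}$ with $|I| = k-1$. Then $\{x_j\}\cup\{x_i : i \in I\}$ is a set of $k$ vectors from our $k$-independent family, hence linearly independent, so Lemma \ref{lem:iplinind} furnishes a vector $z$ with $\langle x_j,z\rangle \neq 0$ and $\langle x_i,z\rangle = 0$ for every $i \in I$. Applying $\langle\,\cdot\,,z\rangle$ to one tensor slot of the vanishing combination yields
\[
  \sum_{i \in [s']\setminus I} c_i \langle x_i, z\rangle\, x_i^{\otimes n} = 0.
\]
The number of surviving summands is $s' - (k-1)$, which I will check is at most $\min(n(k-1)+1,m)$ in both branches of the outer $\min$, so the inductive hypothesis says these $x_i^{\otimes n}$ are linearly independent. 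Therefore $c_i \langle x_i,z\rangle = 0$ for every $i \in [s']\setminus I$; in particular $c_j \langle x_j,z\rangle = 0$, forcing $c_j = 0$. Since $j$ was arbitrary, the combination is trivial.

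The conceptual work is the contraction trick above, which is really the same device that underlies Kruskal-style arguments for tensor decomposition. The only mildly fiddly part is the index bookkeeping at the end, verifying $s'-(k-1)\le\min(n(k-1)+1,m)$: if $s' = (n+1)(k-1)+1$ this gives exactly $n(k-1)+1$, while if $s' = m < (n+1)(k-1)+1$ one must check $m-(k-1) \le n(k-1)+1$, which follows since $m \le (n+1)(k-1)$. I expect this case split to be the main (though minor) obstacle; everything else is standard tensor multilinearity.
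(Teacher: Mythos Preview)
Your proof is correct. It takes a different organizational route from the paper's. The paper argues directly, without induction: in the critical case $m = n(k-1)+1$ it singles out one vector $x$, partitions the remaining $n(k-1)$ vectors into $n$ blocks of size $k-1$, and uses $k$-independence to find for each block $i$ a vector $z_i$ orthogonal to that block with $\langle z_i,x\rangle=1$; the rank-one tensor $z_1\otimes\cdots\otimes z_n$ then pairs to $1$ against $x^{\otimes n}$ and to $0$ against every other $x_{i,j}^{\otimes n}$, so Lemma~\ref{lem:iplinind} gives linear independence in one shot. Your argument instead inducts on $n$: one contraction against a well-chosen $z$ annihilates $k-1$ summands and drops the tensor power by one, after which the inductive hypothesis handles what remains. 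The combinatorial core---$k$-independence lets you kill $k-1$ terms per tensor slot---is the same; the paper packages all $n$ slots into a single rank-one dual tensor, while you process them one at a time. The paper's version is slightly more self-contained; yours makes the reduction mechanism explicit and is arguably the more natural discovery path.
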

\begin{proof}
  We will first consider the case where $n(k-1)+1 = m$. We can relabel the vectors $x_1,\ldots,x_{n(k-1)+1}$ as $x$ and $x_{i,j}$ where $(i,j)\in [n]\times [k-1]$. By $k$-independence, for all $i$, there exists a vector $z_i$ such that $\left<z_i,x\right> = 1$ and $\left<z_i,x_{i,j}\right> = 0$ for all $j$. From this we have that
  \begin{align*}
    \left<x^{\otimes n}, \prod_{i=1}^n z_i\right>=\prod_{i=1}^n \left<x,  z_i\right>= 1 
	  \text{ and }
    \left<x_{i,j}^{\otimes n}, \prod_{l=1}^n z_l\right> = \prod_{l=1}^n\left<x_{i,j},  z_l\right>= 0& \text{, for all $i,j$}.
  \end{align*}
  Because the relabeling is arbitrary it follows that for all $i' \in [n(k-1)+1]$ there exists $\bz_{i'}$ such that $\left<x_{i'}^{\otimes n},\bz_{i'}\right> = 1$  and $\bz_{i'}\perp x_{j'}^{\otimes n}$ for all $j'\neq i'$. Thus we have that $x_1^{\otimes n},\ldots,x_m^{\otimes n}$ are $m$-independent. We will now consider two other cases for the value of $m$. For $m<n(k-1)+1$ we can show that $x_1^{\otimes n},\ldots,x_m^{\otimes n}$ are linearly independent by the same argument. If $m>n(k-1)+1$ then it follows from the $m=n(k-1)+1$ case that every subsequence of length $n(k-1)+1$ of $x_1^{\otimes n},\ldots,x_m^{\otimes n}$ is independent, so it follows that $x_1^{\otimes n},\ldots,x_m^{\otimes n}$ is $\left(n(k-1)+1\right)$-independent.
\end{proof}
The following lemma's proof is very similar to the proof of Lemma \ref{lem:kindpow}, but we defer it to Appendix \ref{app:proofs} due to its length. Note that it recovers Lemma \ref{lem:kindpow} by setting $k' = k$.
\begin{lem}\label{lem:kpindpow}
  Let $x_1,\ldots,x_m$ be $k$-independent with $k\ge2$ and $x$ such that $x,x_1,\ldots,x_m$ is $k'$-independent with $k\ge k'>1$. Then $x^{\otimes n}, x_1^{\otimes n},\ldots,x_m^{\otimes n}$ is $\min(m+1,(n-1)(k-1) + k')$-independent.
\end{lem}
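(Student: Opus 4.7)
The plan is to mimic the construction in the proof of Lemma~\ref{lem:kindpow}, building a dual tensor $w = \prod_{l=1}^{n} z_l$ that isolates a chosen $x_{j_0}^{\otimes n}$ in any purported linear dependence, while accommodating the asymmetric role of $x$ through its weaker independence guarantee.

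First I would reduce to the boundary case $m + 1 \le (n-1)(k-1) + k'$, where the claim is that all $m+1$ tensor powers are linearly independent. If $m + 1$ exceeds this bound, then any length-$((n-1)(k-1)+k')$ subsequence of $x^{\otimes n},x_1^{\otimes n},\ldots,x_m^{\otimes n}$ either avoids $x^{\otimes n}$, in which case Lemma~\ref{lem:kindpow} gives independence (using $(n-1)(k-1) + k' \le n(k-1) + 1$, which holds because $k \ge k'$), or it contains $x^{\otimes n}$ together with $(n-1)(k-1)+k'-1$ of the $x_j^{\otimes n}$, to which the boundary-case argument applies with the smaller index set.

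For the boundary case, fix $j_0 \in [m]$ and partition $[m]\setminus\{j_0\}$, a set of size at most $(n-1)(k-1) + k' - 2$, into $n-1$ blocks $S_1,\ldots,S_{n-1}$ of size at most $k-1$ and one block $S_n$ of size at most $k'-2$. For each $l \in [n-1]$, the set $\{x_{j_0}\} \cup \{x_j : j \in S_l\}$ has at most $k$ elements drawn from the $k$-independent sequence $x_1,\ldots,x_m$, so Lemma~\ref{lem:iplinind} yields a vector $z_l$ with $\left<x_{j_0},z_l\right> \neq 0$ and $\left<x_j,z_l\right> = 0$ for $j \in S_l$. For the final factor, the set $\{x_{j_0},x\} \cup \{x_j : j \in S_n\}$ has at most $k'$ elements of the $k'$-independent sequence $x,x_1,\ldots,x_m$, so Lemma~\ref{lem:iplinind} produces $z_n$ with $\left<x_{j_0},z_n\right> \neq 0$, $\left<x,z_n\right> = 0$, and $\left<x_j,z_n\right> = 0$ for $j \in S_n$. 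Setting $w = \prod_{l=1}^n z_l$, the multiplicativity of inner products on tensor products gives $\left<x_{j_0}^{\otimes n},w\right> \neq 0$, $\left<x^{\otimes n},w\right> = 0$, and $\left<x_j^{\otimes n},w\right> = 0$ for all $j \neq j_0$. Testing any dependence $c\,x^{\otimes n} + \sum_j c_j\, x_j^{\otimes n} = 0$ against this $w$ forces $c_{j_0} = 0$; ranging over $j_0$ kills every $c_j$, and the residual $c\,x^{\otimes n} = 0$ forces $c = 0$ because $x \neq 0$ (by $k'$-independence of $\{x,x_1\}$ with $k' \ge 2$).

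The main obstacle is the asymmetric role of $x$: the first $n-1$ factors can fully exploit the $k$-independence of $x_1,\ldots,x_m$, but $x$ must be annihilated by some factor, and that factor can only appeal to the weaker $k'$-independence of the extended sequence, which is precisely why the bound reads $(n-1)(k-1) + k'$ rather than $n(k-1)+1$. The delicate bookkeeping is arranging the partition so that the kill-capacity of the $n$ factors, namely $(n-1)(k-1)$ from the first $n-1$ plus an additional $k'-2$ on the final factor which must also reserve a slot for $x$, exactly matches the $m-1$ remaining indices.
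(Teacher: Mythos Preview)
Your argument is correct and follows the same block-partition/dual-tensor strategy as the paper, but with one genuine simplification. The paper also builds, for each $j_0\in[m]$, a rank-one tensor that isolates $x_{j_0}^{\otimes n}$ from $x^{\otimes n}$ and from the remaining $x_j^{\otimes n}$, exactly via your partition of $[m]\setminus\{j_0\}$ into $n-1$ blocks of size $\le k-1$ (using $k$-independence) and one block of size $\le k'-2$ paired with $x$ (using $k'$-independence). Where the two proofs diverge is in the treatment of $x^{\otimes n}$ itself: the paper invokes Lemma~\ref{lem:iplinind} literally and therefore must also construct a tensor that is nonzero on $x^{\otimes n}$ but annihilates every $x_j^{\otimes n}$. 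That construction is the most delicate part of the paper's proof, requiring a case split on whether $x^{\otimes n-1}$ lies in $\spn\{x_1^{\otimes n-1},\ldots,x_{(n-1)(k-1)+1}^{\otimes n-1}\}$. You sidestep this entirely by observing that once all the $c_j$ have been shown to vanish, the residual equation $c\,x^{\otimes n}=0$ forces $c=0$ simply because $x\neq 0$. This is a legitimate and cleaner finish; the paper's extra work buys a full dual system (and hence Lemma~\ref{lem:iplinind} in both directions), which is not actually needed here. Your reduction of the $m+1>(n-1)(k-1)+k'$ case to subsequences, including the check $(n-1)(k-1)+k'\le n(k-1)+1$ for applying Lemma~\ref{lem:kindpow}, matches the paper's.
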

To prove Theorem \ref{thm:kident} we will use the following slight adaptation of Kruskal's Theorem.
\begin{thm}[Hilbert space extension of \cite{kruskal77}]\label{thm:kruskalhilb}
  Let $x_1,\ldots,x_r$, $y_1,\ldots,y_r$, and $z_1,\ldots,z_r$ be elements of three Hilbert spaces $\sH_x,\sH_y,\sH_z$ such that $x_1,\ldots,x_r$ are $r_x$-independent with $r_y,r_z$ defined similarly. Further suppose that $r_x+r_y+r_z \ge 2r+2$. If $a_1,\ldots,a_l\in \sH_x, b_1,\ldots, b_l\in \sH_y$, and $c_1,\ldots,c_l\in \sH_z$ with $r\ge l$ such that\sloppy
  \begin{equation*}
    \sum_{i=1}^r x_i\otimes y_i \otimes z_i = \sum_{j=1}^l a_j \otimes b_j \otimes c_j,
  \end{equation*}
  then $l=r$ and there exists a permutation $\sigma: [r] \to [r]$ and $D_x,D_y,D_z \in \rn^r$ such that $a_{\sigma(i)} = x_i D_{x,i}, b_{\sigma(i)} = y_i D_{y,i}$, and $c_{\sigma(i)} = z_iD_{z,i}$ with $D_{x,i}D_{y,i}D_{z,i} = 1$ for all $i$.
\end{thm}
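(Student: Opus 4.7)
The plan is to reduce the Hilbert-space statement to the classical finite-dimensional Kruskal theorem by restricting to the finite-dimensional subspaces actually spanned by the vectors appearing in the equation. The theorem only involves a finite number of vectors in each $\sH_x,\sH_y,\sH_z$, so no infinite-dimensional machinery is truly needed; the main obstacle is merely to verify that none of the relevant hypotheses, in particular $k$-independence, are lost under this reduction.

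First I would set $V_x = \spn(x_1,\ldots,x_r,a_1,\ldots,a_l) \subseteq \sH_x$ and define $V_y,V_z$ analogously. Each of these is finite-dimensional with dimension at most $r+l$. I would choose orthonormal bases of $V_x,V_y,V_z$ to obtain isometric isomorphisms $V_x \cong \rn^{d_x}$, $V_y\cong \rn^{d_y}$, $V_z\cong \rn^{d_z}$, which in turn give an isometric isomorphism of tensor product spaces $V_x\otimes V_y\otimes V_z \cong \rn^{d_x}\otimes \rn^{d_y}\otimes \rn^{d_z}$. Under these identifications, all of $x_i,y_i,z_i,a_j,b_j,c_j$ become ordinary coordinate vectors, and the original tensor equation becomes an identity between two sums of simple tensors in the Euclidean tensor space.

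Next I would verify that the hypotheses transfer correctly. Because our isomorphisms are linear bijections, every linear relation among a subset of the $x_i$ corresponds to a linear relation among their images and vice versa; hence the image vectors remain $r_x$-independent, and their Kruskal rank in the matrix-theoretic sense is at least $r_x$. The analogous statements hold for the $y_i$ and $z_i$, so $r_x+r_y+r_z\ge 2r+2$ continues to hold in the Euclidean setting. At this point the hypotheses of the classical Kruskal uniqueness theorem are satisfied verbatim, and applying it yields $l=r$, a permutation $\sigma:[r]\to[r]$, and scalars $D_{x,i},D_{y,i},D_{z,i}$ with $D_{x,i}D_{y,i}D_{z,i}=1$ such that $a_{\sigma(i)} = x_iD_{x,i}$ in $\rn^{d_x}$, and analogously for $b,c$.

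Finally I would translate these conclusions back across the isomorphisms to $\sH_x,\sH_y,\sH_z$. Since the relations obtained are purely linear (scalar multiplications and permutations), the identifications preserve them, so the conclusion of the theorem holds in the original Hilbert spaces. The only genuinely nontrivial ingredient is the classical Kruskal theorem itself; the extension to Hilbert spaces contributes nothing beyond the observation that everything in sight lives in a finite-dimensional subspace where orthonormal coordinates can be chosen.
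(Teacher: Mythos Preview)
Your proposal is correct and follows essentially the same approach as the paper: the paper likewise defines $\overline{\sH}_x = \spn(\{x_1,\ldots,x_r,a_1,\ldots,a_l\})$ (and analogously for $y,z$), observes that these spaces are finite-dimensional, and then invokes the classical Kruskal theorem directly. Your write-up is simply more explicit about why $k$-independence and the conclusions transfer across the isometries, which the paper leaves implicit.
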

The following three lemmas allow us to embed general measures in Hilbert spaces and will allow us to use tools from Hilbert space theory \citep{kadison83}.
\begin{lem}[\cite{vandermeulen19} Lemma 6.2] \label{lem:himbed}
  Let $\gamma_1,\ldots,\gamma_n$ be finite measures on a measurable space $\left( \Psi, \sG \right)$. There exists a finite measure $\pi$ and nonnegative functions $f_1,\ldots,f_n \in  L^1\left( \Psi,\sG,\pi \right)\cap L^2\left( \Psi,\sG,\pi \right)$ such that, for all $i$ and all $B \in \sG$
  \begin{equation*}
    \gamma_i(B)=\int_B f_i d\pi.
  \end{equation*}
\end{lem}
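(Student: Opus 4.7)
The plan is to construct $\pi$ as a dominating measure for the entire family $\gamma_1,\ldots,\gamma_n$ and then invoke Radon--Nikodym. The natural choice is
\[
\pi \triangleq \sum_{i=1}^n \gamma_i,
\]
which is a finite measure because each $\gamma_i$ is finite. By construction $\gamma_i(B) \le \pi(B)$ for every $B \in \sG$ and every $i$, so in particular $\gamma_i \ll \pi$.

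Next I would apply the Radon--Nikodym theorem to each pair $(\gamma_i,\pi)$ to obtain nonnegative measurable functions $f_i = d\gamma_i/d\pi$ such that $\gamma_i(B) = \int_B f_i\, d\pi$ for all $B \in \sG$, which is the integral representation required in the statement.

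It remains to verify the integrability claims. The $L^1$ bound is immediate: $\int f_i\, d\pi = \gamma_i(\Psi) < \infty$ since $\gamma_i$ is a finite measure. The key observation for $L^2$ is that $f_i \le 1$ $\pi$-almost everywhere. Indeed, if the set $A = \{f_i > 1\}$ had $\pi(A)>0$, then
\[
\gamma_i(A) = \int_A f_i\, d\pi > \pi(A) \ge \gamma_i(A),
\]
a contradiction; here the last inequality uses $\pi \ge \gamma_i$ set-wise. With $0 \le f_i \le 1$ $\pi$-a.e., we get $f_i^2 \le f_i$, so $\int f_i^2\, d\pi \le \int f_i\, d\pi < \infty$, giving $f_i \in L^2(\Psi,\sG,\pi)$.

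There is no real obstacle here: the whole argument is a one-line application of Radon--Nikodym once the dominating measure is chosen, and the only subtlety is the elementary argument that $f_i \le 1$ a.e., which upgrades $L^1$ membership to $L^2$ via the $L^\infty$ bound.
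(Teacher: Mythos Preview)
Your argument is correct and is exactly the standard construction: take $\pi=\sum_i \gamma_i$, apply Radon--Nikodym, and use the pointwise bound $f_i\le 1$ $\pi$-a.e.\ to pass from $L^1$ to $L^2$. The paper does not supply its own proof of this lemma---it is quoted verbatim from \cite{vandermeulen19}---so there is nothing further to compare against; your proof is precisely the one any reader would reconstruct.
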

The last lemma will be used in particular to embed collections of probability measures in a joint measure space as pdfs.
\begin{lem}[\cite{vandermeulen19} Lemma 6.3] \label{lem:radprod}
  Let $\left( \Psi, \sG \right)$ be a measurable space, $\gamma$ and $\pi$ a pair of finite measures on that space, and $f$ a nonnegative function in $L^1\left(\Psi,\sG, \pi \right)$ such that, for all $A \in \sG$, $\gamma\left( A \right)=\int_A f d\pi$. Then for all $n$, for all $B \in \sG^{\times n}$ we have
  \begin{equation*}
    \gamma^{\times n}\left( B \right) = \int_B f^{\times n} d\pi^{\times n}.
  \end{equation*}
\end{lem}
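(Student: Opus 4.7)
The plan is to prove the statement by induction on $n$, with the base case $n=1$ being exactly the hypothesis. For the inductive step, I would exploit the standard construction of the product measure together with Tonelli's theorem, verify the claimed identity on the $\pi$-system of measurable rectangles, and then extend to the whole product $\sigma$-algebra by a monotone class / Dynkin argument.

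More concretely, suppose the identity $\gamma^{\times(n-1)}(B') = \int_{B'} f^{\times(n-1)} \, d\pi^{\times(n-1)}$ holds for all $B' \in \sG^{\times(n-1)}$. For a measurable rectangle $B = B' \times A$ with $A \in \sG$, the defining property of the product measure gives $\gamma^{\times n}(B'\times A) = \gamma^{\times(n-1)}(B')\,\gamma(A)$. Applying the inductive hypothesis to the first factor and the base case to the second, this equals $\left(\int_{B'} f^{\times(n-1)} d\pi^{\times(n-1)}\right)\left(\int_A f\, d\pi\right)$. Since $f^{\times n}$ is nonnegative, Tonelli's theorem lets me combine these two integrals into a single integral over $B'\times A$ with respect to $\pi^{\times n}$, i.e.\ $\int_{B'\times A} f^{\times n} d\pi^{\times n}$. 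This establishes the desired identity on every measurable rectangle.

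To pass from rectangles to all of $\sG^{\times n}$, I would observe that the collection of measurable rectangles is a $\pi$-system generating $\sG^{\times n}$, and that both sides of the proposed equality, viewed as set functions $B \mapsto \gamma^{\times n}(B)$ and $B \mapsto \int_B f^{\times n} d\pi^{\times n}$, are finite measures on $\sG^{\times n}$ (the second because $f^{\times n}$ is nonnegative and $\pi^{\times n}$-integrable, a fact that itself follows from Tonelli applied to $f^{\otimes n}$ together with $f\in L^1(\pi)$ and $\pi$ finite). Having two finite measures that agree on a generating $\pi$-system, Dynkin's $\pi$-$\lambda$ theorem forces them to agree on the entire $\sigma$-algebra, completing the induction.

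There is really no single hard step here: the statement is a textbook measure-theoretic fact. The only mild care needed is to justify that $f^{\times n}$ is in $L^1(\pi^{\times n})$ so that both sides are genuine finite measures before invoking the uniqueness-of-measures argument, and to ensure Tonelli is applied to a nonnegative integrand so that no integrability hypothesis on joint functions is required. Because $\pi$ is finite and $f\in L^1(\pi)$, iterated application of Tonelli handles this cleanly and the induction goes through without incident.
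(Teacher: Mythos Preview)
Your argument is correct and is the standard textbook route: verify the identity on rectangles via the inductive hypothesis and Tonelli, then invoke the $\pi$-$\lambda$ theorem to extend to all of $\sG^{\times n}$. Note, however, that this paper does not actually prove the lemma itself; it is quoted verbatim as Lemma 6.3 from \cite{vandermeulen19} and used as a black box, so there is no in-paper proof against which to compare your approach.
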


\begin{lem}[\cite{vandermeulen19} Lemma 5.2]
  \label{lem:l2prod}
  Let $\left(\Psi,\sG,\gamma\right)$ be a measure space. There exists a unitary transform $U:L^2\left( \Psi, \sG, \gamma \right)^{\otimes n} \to L^2\left( \Psi^{\times n}, \sG^{\times n}, \gamma^{\times n} \right)$ such that, for all $f_1,\ldots, f_n \in L^2\left( \Psi, \sG, \gamma \right)$,
  \begin{equation*}
    U\left( f_1\otimes \cdots \otimes f_n \right) = f_1(\cdot)\cdots f_n(\cdot).
  \end{equation*}
\end{lem}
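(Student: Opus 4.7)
The plan is to build $U$ first on the algebraic tensor product of $n$ copies of $L^2(\Psi,\sG,\gamma)$ and then extend by continuity to the Hilbert-space completion. To begin, I would define a multilinear map $\Phi:L^2(\Psi,\sG,\gamma)^n\to L^2(\Psi^{\times n},\sG^{\times n},\gamma^{\times n})$ sending $(f_1,\ldots,f_n)$ to the pointwise-product function $(x_1,\ldots,x_n)\mapsto f_1(x_1)\cdots f_n(x_n)$. Fubini--Tonelli shows the image lies in $L^2$ with $\|\Phi(f_1,\ldots,f_n)\|^2=\prod_i\|f_i\|_{L^2(\gamma)}^2$, so by the universal property of the algebraic tensor product, $\Phi$ factors through a unique linear map $U_0$ from the algebraic tensor product into $L^2(\gamma^{\times n})$ that acts by the desired formula on simple tensors.

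Next I would verify that $U_0$ is an isometry and then extend it. On simple tensors the Hilbert tensor-product inner product satisfies $\langle f_1\otimes\cdots\otimes f_n,\, g_1\otimes\cdots\otimes g_n\rangle=\prod_i\langle f_i,g_i\rangle_{L^2(\gamma)}$, and Fubini shows that the $L^2(\gamma^{\times n})$ inner product of the corresponding pointwise products equals this same number. Extending bilinearly across finite sums of simple tensors preserves the identity, so $U_0$ is an isometry on the algebraic tensor product and extends uniquely to an isometry $U$ on the completion $L^2(\Psi,\sG,\gamma)^{\otimes n}$, still satisfying $U(f_1\otimes\cdots\otimes f_n)=f_1(\cdot)\cdots f_n(\cdot)$ on simple tensors by continuity.

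It remains to establish surjectivity. Since $U$ is an isometry from a complete space, its image is closed. Taking $f_i=1_{A_i}$ shows the image contains the indicators of all measurable rectangles $A_1\times\cdots\times A_n$, and the linear span of such indicators is dense in $L^2(\gamma^{\times n})$: rectangles form a $\pi$-system generating $\sG^{\times n}$, so a standard monotone-class argument combined with density of simple functions in $L^2$ yields the required density. Density plus closedness forces the image of $U$ to be all of $L^2(\gamma^{\times n})$, so $U$ is unitary. The main obstacle is precisely this density claim; passing from generation of the product $\sigma$-algebra by rectangles to $L^2$-density of finite linear combinations of rectangle indicators requires some care, but it is clean here because $\gamma$ will be finite in every intended application (the embedding measures produced by Lemma \ref{lem:himbed}). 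All remaining ingredients are routine applications of the universal property of tensor products and Fubini's theorem.
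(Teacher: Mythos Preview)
The paper does not prove this lemma; it is quoted directly from \cite{vandermeulen19} (their Lemma~5.2) and no argument appears either in the body or in Appendix~\ref{app:proofs}. So there is no in-paper proof to compare against.

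On its own merits your proposal is the standard construction and is correct. Defining the map on simple tensors via the universal property, checking the inner-product identity with Fubini, and extending by continuity from the algebraic tensor product to its completion is exactly how this isomorphism is built in, e.g., \cite{kadison83}. Your identification of the one nontrivial step---$L^2$-density of finite linear combinations of rectangle indicators in $L^2(\gamma^{\times n})$---is apt, and your observation that the dominating measure $\xi$ supplied by Lemma~\ref{lem:himbed} is finite is precisely what makes the monotone-class argument go through cleanly (indeed $\sigma$-finiteness would already suffice). Nothing is missing.
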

Here and elsewhere note that powers of a $\sigma$-algebra utilizes the standard $\sigma$-algebra product.
Finally we remind the reader of the following standard result from real analysis .
\begin{lem}[\cite{folland99} Proposition 2.23] \label{lem:inteq}
  Let $\left( \Psi,\sG,\gamma \right)$ be a measure space and $f,g \in L^1\left( \Psi,\sG,\gamma \right)$. Then $f=g$ $\gamma$-almost everywhere iff, for all $A\in \sG$, $\int_A f d\gamma = \int_A g d\gamma$.\sloppy
\end{lem}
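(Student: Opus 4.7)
The forward implication should be immediate: if $f = g$ holds $\gamma$-a.e., then $f - g$ vanishes on a set of full measure, so I would just invoke linearity of the integral together with the fact that an a.e.-zero $L^1$ function integrates to $0$ over any measurable set. The real work is the converse.

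For the converse, my plan is to set $h := f - g \in L^1(\Psi,\sG,\gamma)$ and, using the hypothesis that $\int_A h\, d\gamma = 0$ for every $A \in \sG$, deduce that $h = 0$ $\gamma$-a.e. To do this, I would show separately that $\gamma(\{h>0\}) = 0$ and $\gamma(\{h<0\}) = 0$; by the symmetry $h \mapsto -h$ it suffices to handle the positive case. The standard device here is to exhaust $\{h>0\}$ by the level sets $A_n := \{x : h(x) \geq 1/n\}$, which are measurable because $h$ is measurable, and which increase to $\{h>0\}$ as $n \to \infty$.

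The concluding step will be a one-line estimate: applying the hypothesis to $A = A_n$ and combining with the pointwise bound $h \geq 1/n$ on $A_n$ yields
\begin{equation*}
  0 \;=\; \int_{A_n} h\, d\gamma \;\geq\; \frac{1}{n}\,\gamma(A_n),
\end{equation*}
so $\gamma(A_n) = 0$ for each $n$. Continuity of measure from below would then give $\gamma(\{h>0\}) = \lim_n \gamma(A_n) = 0$, and running the same argument on $-h$ finishes the proof.

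The only real subtlety---and it is hardly an obstacle, given that this is a standard textbook result---is confirming measurability of the level sets $A_n$ and justifying the displayed inequality; both are automatic once one observes that $f$ and $g$, being in $L^1$, are measurable, and hence so is $h$. I do not anticipate any deeper difficulties, as the argument is a template application of the level-set trick that converts an integral bound into a pointwise conclusion.
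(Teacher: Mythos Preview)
Your argument is correct and is essentially the standard textbook proof of this fact. Note, however, that the paper does not supply its own proof of this lemma: it is simply quoted as Proposition~2.23 from Folland's \emph{Real Analysis} and used as a black box. So there is nothing in the paper to compare against beyond the citation; your level-set argument is precisely the kind of proof one finds in Folland (who phrases it via the positive and negative parts of $h$, which amounts to the same thing).
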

For the rest of the paper we will leave the ``almost everywhere'' qualifier implicit. We can now prove the main theorems in Section \ref{sec:main}.
\begin{proof}\textbf{of Theorem \ref{thm:kident}}
  Let $\sQ=\sum_{i=1}^l b_i \delta_{\nu_i}$ be a mixture of measures with $l \le m$, such that $V_n(\sP) = V_n(\sQ)$. From this we have that
  \begin{equation*}
    \sum_{i=1}^m a_i \mu_i^{\times n} = \sum_{j=1}^l b_j \nu_j^{\times n}.
  \end{equation*}
  From Lemma \ref{lem:himbed} there exists a measure $\xi$ and nonnegative functions $p_1,\ldots,p_m,q_1,\ldots,q_l \in L^1(\xi)\cap L^2(\xi)$, such that, for all measurable $A$ and $i$, $\mu_i(A) = \int_A p_i d\xi$ and $\nu_i(A) = \int_A q_i d\xi$.
  From Lemmas \ref{lem:radprod} and \ref{lem:inteq} we have that
  \begin{equation*}
    \sum_{i=1}^m a_i p_i^{\times n} = \sum_{j=1}^l b_j q_j^{\times n},
  \end{equation*}
  and from Lemma \ref{lem:l2prod} we have
  \begin{equation}\label{eqn:identhilb}
    \sum_{i=1}^m a_i p_i^{\otimes n} = \sum_{j=1}^l b_j q_j^{\otimes n}.
  \end{equation}
  From the theorem hypothesis we know that $m\ge 2$, and trivially $k \le m$, so we have the following
  \begin{align*}
    2m-1\le (k-1)n \Rightarrow 2m-1\le (m-1)n \Rightarrow\frac{2m-1}{m-1} \le n \Rightarrow 2 < n \Rightarrow 3 \le n.
  \end{align*}
  Because $n\ge 3$ it is always possible to decompose $n=n_1+n_2+n_3$ where $n_i$ are all positive integers.

  We will now prove the following claim which we will denote ``\dag'': if the sequences $p_1^{\otimes n_i},\ldots, p_m^{\otimes n_i}$ (for all $i\in[3]$) are $k_i$-independent respectively and $k_1+k_2+k_3\ge 2m+2$ then the theorem conclusion follows. From (\ref{eqn:identhilb}) we have that\sloppy
  \begin{equation*}
    \sum_{i=1}^m a_i p_i^{\otimes n_1} \otimes p_i^{\otimes n_2} \otimes p_i^{\otimes n_3}  = \sum_{j=1}^l b_j q_j^{\otimes n_1} \otimes q_j^{\otimes n_2} \otimes q_j^{\otimes n_3}.
  \end{equation*}

  From Theorem \ref{thm:kruskalhilb} we have that $l = m$, there exists $D_1,D_2,D_3 \in \rn^m$, and a permutation $\sigma :[m] \to [m]$, such that, for all $i$
  \begin{align}
    b_{\sigma(i)}q_{\sigma(i)}^{\otimes n_1} &=  a_i p_i^{\otimes n_1}D_{1,i}\notag\\
    q_{\sigma(i)}^{\otimes n_2} &=  p_i^{\otimes n_2}D_{2,i}\label{eqn:d2}\\
    q_{\sigma(i)}^{\otimes n_3} &=  p_i^{\otimes n_3}D_{3,i} \notag
  \end{align}
  where $D_{1,i}D_{2,i}D_{3,i} =1$ for all $i$. Applying Lemma \ref{lem:l2prod} to (\ref{eqn:d2}) we have that, for all $i$
  \begin{align*}
    \int q_{\sigma(i)}^{\times n_2} d\xi^{\times n_2} = \int D_{2,i} p_i^{\times n_2}d\xi^{\times n_2} \Rightarrow 1 = D_{2,i}.
  \end{align*}
  So $D_2$ is a vector of ones and so is $D_3$ by the same argument. We have that $D_1$ is also a vector of ones since $D_{1,i}D_{2,i}D_{3,i} =1$ for all $i$. Thus we have that $p_i = q_{\sigma(i)}$ for all $i$. Assuming that $\sigma$ is the identity mapping it follows that $a_i = b_i$ and we have shown \dag.

  Now that \dag  has been demonstrated, to finish the proof we will show that we can decompose $n=n_1+n_2+n_3$ such that $p_1^{\otimes n_i},\ldots, p_m^{\otimes n_i}$ are $k_i$-independent for each $i$ with $k_1+k_2+k_3 \ge 2m+2$ which will finish our proof. To continue we will split into the cases where $n \mod 3$ is 0, 1, or 2.\\
  \textbf{Case 0:}
  We have that $n= 3n'$ for some positive integer $n'$ and we can let $n_1=n_2=n_3 = n'$. Now we can reformulate our Hilbert space embedding:
  \begin{align*}
    \sum_{i=1}^m a_i p_i^{\otimes n} = \sum_{i=1}^m a_i p_i^{\otimes n'}\otimes p_i^{\otimes n'}\otimes p_i^{\otimes n'}.
  \end{align*}
  From Lemma \ref{lem:kindpow} we know that $p_1^{\otimes n'},\ldots,p_m^{\otimes n'}$ are $\min\left((k-1)n'+1,m\right)$-independent. If $\min\left((k-1)n'+1,m\right)$ is $m$ then it follows that $k_1+k_2+k_3 = 3m \ge 2m+2$ since $m\ge2$. If $\min\left((k-1)n'+1,m\right)= (k-1)n'+1$ then we have that $k_1+k_2+k_3 = 3(k-1)n'+3 = (k-1)n + 3 \ge 2m+2$ by the theorem hypothesis $(k-1)n\ge 2m-1$.\sloppy\\
  \textbf{Case 1:} Here we have that $n = 3n' + 1$ and we let $n_1 = n_2 = n'$ and $n_3= n'+1$, so
  \begin{align*}
    \sum_{i=1}^m a_i p_i^{\otimes n} = \sum_{i=1}^m a_i p_i^{\otimes n'}\otimes p_i^{\otimes n'}\otimes p_i^{\otimes n'+1}.
  \end{align*}
  From Lemma \ref{lem:kindpow} we have that $k_1=k_2 = \min(m,(k-1)n'+1)$ and $k_3 = \min(m,(k-1)(n'+1)+1)$.
  If we have that $\min(m,(k-1)n'+1) = m$ then it follows that $\min(m,(k-1)(n'+1)+1) = m$ and $k_1+k_2+k_3 = 3m\ge 2m+2$. 

  If we have that $\min(m,(k-1)n'+1) =(k-1)n'+1$ and $\min(m,(k-1)(n'+1)+1) = (k-1)(n'+1)+1$ then we have that
  \begin{align*}
    k_1+k_2+k_3
    &=2((k-1)n'+1) + (k-1)(n'+1)+1
    =(3n' + 1 )(k-1) +3\\
    &=n(k-1) +3
    \ge 2m+2,
  \end{align*}
  by the theorem hypothesis.

  Lastly, if we have that $\min(m,(k-1)n'+1) = (k-1)n'+1$ and $\min(m,(k-1)(n'+1)+1)
  = m$ with $m < (k-1)(n'+1)+1$ (for equality see the above $\min(m,(k-1)(n'+1)+1)
  =(k-1)(n'+1)+1$ case) then we have that
  \begin{align*}
    &m < (k-1)(n'+1)+1\\
    \Rightarrow& m < (k-1)2n'+1\\
    \Rightarrow &m \le (k-1)2n'\\
    \Rightarrow &2m + 2 \le 2((k-1)n' + 1) + m
    = k_1 + k_2 + k_3,
  \end{align*}
  where the second inequality follows because $n' \ge 1$ and the third inequality follows because both sides of the inequality are integers.
  \\\textbf{Case 2:} We have that $n = 3n'+2$, so let $n_1 = n'$ and $n_2=n_3 = n'+1$. Now we have that
  \begin{align*}
    \sum_{i=1}^m a_i p_i^{\otimes n} = \sum_{i=1}^m a_i p_i^{\otimes n'}\otimes p_i^{\otimes n'+1}\otimes p_i^{\otimes n'+1}.
  \end{align*}
  If $\min(m,(k-1)n' +1) = \min(m,(k-1)(n'+1) +1) = m$ and thus $m=k_1=k_2=k_3$, we have that $k_1+k_2+k_3 \ge 2m+2$ as in the previous cases. If we have that $k_2=k_3=\min(m,(k-1)(n'+1) +1) = (k-1)(n'+1)+1$ then it also follows that $k_1 = \min(m,(k-1)n' +1)=(k-1)n'+1$ and
  \begin{equation*}
    k_1+k_2+k_3 = (k-1)(3n'+2) +3 = n(k-1)+3 \ge 2m+2.
  \end{equation*}
  Now if we have that $k_2=k_3 = m$ and $k_1 =(k-1)n' + 1$ then $k_1+k_2+k_3 = 2m + (k-1)n'+1$ and since $k\ge 2$ and $n' \ge 1$ we have that $k_1+k_2+k_3 \ge 2m +2$ so we are done, and with this final case have finished the proof.
\end{proof}

\begin{proof}\textbf{Sketch of Theorems \ref{thm:nokident} and \ref{thm:nokdet}}
  Theorem 4.2 in \cite{vandermeulen19} states that, for all $m \ge 2$, there exists a mixture of measures $\sP = \sum_{i=1}^m a_i \delta_{\mu_i}$ which is not $2m-2$-identifiable. Therefore there exists a mixture of measures $\sQ = \sum_{i=1}^{l} b_i \delta_{\nu_i}$ with $\sP\neq\sQ$, and $l \le m$ such that
  \begin{align*}
    \sum_{i=1}^m a_i \mu_i^{\times 2m-2} = \sum_{j=1}^{l} b_j \nu_j^{\times 2m-2}.
  \end{align*}
  Since $(k-1)n \le 2m-2$ we have that either $(k-1)n = 2m-2$ or, if $(k-1)n < 2m-2$ we can apply Lemma \ref{lem:noident}, giving us
  \begin{align*}
    \sum_{i=1}^m a_i \mu_i^{\times (k-1)n} = \sum_{j=1}^m b_j \nu_j^{\times (k-1)n}.
  \end{align*}
  Since any pair of distinct probability measures are linearly independent, it follows that any collection of probability measures are $2$-independent. Using this fact we can can adapt Lemma \ref{lem:kindpow} to show that $\mu_1^{\times k-1},\ldots ,\mu_m^{\times k-1}$ are $k$-independent. Letting $\sP' = \sum_{i=1}^m a_i \delta_{\mu_i^{\times k-1}}$ and $\sQ' = \sum_{i=1}^{l} b_i \delta_{\nu_i^{\times k-1}}$, we have that $V_n(\sP') = V_n(\sQ')$ and we are done. The proof of Theorem \ref{thm:nokdet} is virtually identical and follows from \cite{vandermeulen19} Theorem 4.4.
\end{proof}

\begin{proof}\textbf{of Theorem \ref{thm:kdet}}
   If $n=2$  then
  \begin{align*}
    &2m-2 \le \left(k-1\right) \left(n-1\right) \\
    \Rightarrow &2m-1 \le k \le m \quad \text{(noting $k\le m$)} \\
    \Rightarrow& m\le 1 \Rightarrow m=1.
  \end{align*}
   Since the theorem hypothesis assumes $m\ge2$ we have that the theorem is vacuously true for the $n=2$ case. \footnote{Its worth noting that one does indeed have determinedness for $n=2$ and $m=1$ from Table \ref{tab:oldresults} row two.} To finish the proof we will show that theorem holds for $n=4$ and then proceed by induction.\\
  \textbf{Base Step}: Let $n=4$. We will proceed by contradiction and assume there exists $k\ge2$ and a collection of $k$-independent probability measures $\mu_1,\ldots,\mu_m$ such that there exists a mixture of measures $\sP = \sum_{i=1}^m a_i \delta_{\mu_i}$ and $\sQ= \sum_{i=1}^l b_i \delta_{\nu_i}$ a mixture of measures where $\sP \neq \sQ$ and $V_4(\sP) = V_4(\sQ)$.
  From the theorem hypothesis that $m \ge 2$ it follows that $k\ge2$ and $k-2 \ge0$. Applying this bound to our theorem hypothesis with $n=4$ we have that
  \begin{align*}
    2m-2 \le (k-1)3
    \Rightarrow 2m-1 \le 3k-2 \le 3k-2 + (k-2) = 4k-4 = (k-1)4 = (k-1)n.
  \end{align*}
  Since we have that $2m-1 \le (k-1)n$ we can apply Theorem \ref{thm:kident} and thus $l > m$.  We will proceed analogously to the proof of Theorem \ref{thm:kident} and embed the measures in a Hilbert space as before
  \begin{equation}
    \sum_{i=1}^m a_i p_i^{\otimes 4} = \sum_{j=1}^l b_j q_j^{\otimes 4}.\label{eqn:smiley}
  \end{equation}
  Because $l> m$ there exists $i$ such that $q_i \neq p_j$ for all $j$.
  We will assume without loss of generality that $q_1$ satisfies this. Let $k'$ be the largest value such that $q_1,p_1,\ldots,p_m$ are $k'$-independent.
  We will now show that that $k'<k$. To see this suppose that $k'\ge k$ which would imply that $q_1,p_1,\ldots,p_m$ are $k$-independent. Observe that $m>2$ (thus $m\ge 3$, we use this at \eqref{eqn:m3}). Were this not the case then the components of $\sP$, $\mu_1$ and $\mu_2$, would be linearly independent and $\sP$ would be 4-determined from Table \ref{tab:oldresults} row 4, thereby violating the contradiction hypothesis. With the base case $n=4$ in our theorem hypothesis, and the fact that $k$ and $m$ are positive integers, we get
  \begin{align}
    2m-2 \le 3(k-1)
    &\Rightarrow 2m \le 3k-1\notag\\
    &\Rightarrow \frac{4}{3}m \le 2k-\frac{2}{3}\notag\\
    &\Rightarrow m \le 2k-\frac{2}{3}- \frac{1}{3}m\notag\\
    &\Rightarrow m \le 2k-\frac{2}{3}- 1\quad\text{(using $m\ge 3 $)}\label{eqn:m3}\\
    &\Rightarrow m \le 2k- 2 \quad \text{($m$ is an integer)}\notag\\
    &\Rightarrow m+1 \le 2(k-1)+1. \label{eqn:kpindpow}
  \end{align}
  From application of Lemma \ref{lem:kindpow} it follows that $q_1^{\otimes 2},p_1^{\otimes 2 },\ldots,p_m^{\otimes 2 }$ are $\min(2(k-1)+1,m+1)$-independent and from (\ref{eqn:kpindpow}) it follows that they are linearly independent. Now we have that there exists $z$ such that $z\perp p_i^{\otimes 2}$ for all $i$ but $\left<z,q_1^{\otimes 2}\right> = 1$ and thus
  \begin{align}
    0= \left<z^{\otimes 2},\sum_{i=1}^m a_i p_i^{\otimes 4}\right> \label{eqn:detli}
    = \left<z^{\otimes 2},\sum_{j=1}^l b_j q_j^{\otimes 4}\right>
    =\sum_{j=1}^l b_j \left<z, q_j^{\otimes 2}\right>^2 > 0,
  \end{align}
  a contradiction. So $k'<k$.

  Because $k'<k$ there must exist a collection of $k'$ elements of $p_1,\ldots,p_m$, which we denote $p_{i_1},\ldots,p_{i_{k'}}$, such that $q_1,p_{i_1},\ldots,p_{i_{k'}}$ are linearly dependent; for convenience we will assume without loss of generality that these elements are $p_1,\ldots,p_{k'}$. Because $p_1,\ldots,p_{k'}$ are linearly independent but $q_1,p_1,\ldots,p_{k'}$ are linearly dependent we have that $q_1 = \sum_{i=1}^{k'}\alpha_i p_i$ for some $\alpha_1,\ldots,\alpha_{k'}$.

  We will now show that $k' \ge 2k-m+1$.
  Suppose this were not the case and $k'\le 2k-m$ or equivalently $m \le 2k-k'$. By $k$-independence there exists a vector $z$ such that $\left<z,p_{k'}\right> = 1$ with $z \perp p_1,\ldots,p_{k'-1}, p_{k'+1},\dots, p_{k}$ and another vector $z'$ such that $\left<z',p_1\right> = 1$ and $z' \perp p_2,\ldots,p_{k'},p_{k+1},\ldots,p_{m}$. We know $z'$ exists since $k'\le 2k-m$ so the cardinality of $p_2,\ldots,p_{k'},p_{k+1},\ldots,p_{m}$ satisfies the following\sloppy
  \begin{align*}
    \left|\left\{2,\ldots,k'\right\}\right|+\left|\left\{k+1,\ldots,m \right\} \right|
    =k'-1 + m-k \le 2k-m -1 +m -k =k-1.
  \end{align*}
  Now we have that
  \begin{align*}
    &\left<z^{\otimes 2}\otimes z'^{\otimes 2}, \sum_{i=1}^m a_i p_i^{\otimes 4} \right>
    =\sum_{i=1}^m a_i \left<z,p_i\right>^2\left<z',p_i\right>^2\\
    &=\sum_{i\in\left\{1,\ldots,k'-1,k'+1,\ldots,k\right\}} a_i \left<z,p_i\right>^2\left<z',p_i\right>^2
    + \sum_{i\in\left\{k',k+1,\ldots,m\right\}} a_i \left<z,p_i\right>^2\left<z',p_i\right>^2\\
    &=\sum_{i\in\left\{1,\ldots,k'-1,k'+1,\ldots,k\right\}} a_i 0\left<z',p_i\right>^2
    + \sum_{i\in\left\{k',k+1,\ldots,m\right\}} a_i \left<z,p_i\right>^2 0\\
    &=0.
  \end{align*}
  On the other hand we have that
  \begin{align*}
    \left<z^{\otimes 2}\otimes z'^{\otimes 2}, \sum_{i=1}^l b_i q_i^{\otimes 4} \right>
    &= \sum_{i=1}^l b_i \left<z,q_i\right>^2\left<z',q_i\right>^2
    \ge b_1 \left<z,q_1\right>^2\left<z',q_1\right>^2\\
    &= b_1 \left<z,\sum_{i=1}^{k'}\alpha_i p_i\right>^2\left<z',\sum_{i=1}^{k'}\alpha_i p_i\right>^2\\
    &= b_1 \left(\sum_{i=1}^{k'}\alpha_i\left<z, p_i\right>\right)^2\left(\sum_{i=1}^{k'}\alpha_i\left<z', p_i\right>\right)^2\\
    &= b_1 \alpha_{k'}^2\alpha_1^2
    >0
  \end{align*}
  which contradicts (\ref{eqn:smiley}). Thus we have that $k'\ge 2k-m+1$.

  We are now going to show that $q_1^{\otimes 2},p_1^{\otimes 2 },\ldots,p_m^{\otimes 2 }$ are linearly independent via Lemma \ref{lem:kpindpow}. To do this we will show that $(2-1)(k-1)+ k' \ge m+1$. Using $k' \ge 2k-m+1$ we have that 

  \begin{align*}
    2m-2 &\le (4-1)(k-1) \quad \text{(base case hypothesis with $n=3$)}\\
    \Rightarrow  2m &\le 3k-1\\
    \Rightarrow  m + 1 &\le 3k -m\\
    \Rightarrow  m + 1 &\le (k-1) +(2k-m+1)\\
    \Rightarrow  m + 1 &\le (k-1) +k'.
  \end{align*}
  Since $q_1^{\otimes 2},p_1^{\otimes 2},\ldots,p_m^{\otimes 2}$ are linearly independent we can finish our contradiction using the same argument as in (\ref{eqn:detli}).

  \textbf{Induction Step}:
  We will now proceed by induction along $n$ in an increment of $2$ since the theorem statement holds for even-valued $n$. For our inductive hypothesis assume that for even valued $n\ge 4$ and all $k,m$ with $2m-2 \le (n-1)(k-1)$ that any mixture of measures with $m$ components which are $k$-independent are $n$-determined. Consider some mixture of measures $\sP = \sum_{i=1}^{m'}a_i\delta_{\mu_i}$ with $k$-independent components and $2m'-2 \le (k-1)((n+2)-1)$. If $k=m'$ then we have that the components are linearly independent so it is $(n+2)$-determined by Lemma \ref{lem:ident} and Table \ref{tab:oldresults} row four. Now suppose that $m'>k$. Let $\sQ$ be a mixture of measures with $l$ components such that $V_{n+2}(\sP) = V_{n+2}(\sQ)$. Embedding $V_{n+2}\left(\sP\right)$ and $V_{n+2}\left(\sQ\right)$ as before we have that
  \begin{align*}
    \sum_{i=1}^{m'} a_i p_i^{\otimes n+2} = \sum_{j=1}^l b_j q_j^{\otimes n+2}.
  \end{align*}
  By $k$-independence there exists $z$ such that $\left<z,p_1\right> \neq 0$ and $z \perp p_{m'-(k-1)+1},\ldots, p_{m'}$. Now we have that
  \begin{align}
    \sum_{i=1}^{m'} a_i p_i^{\otimes n+2} = \sum_{j=1}^l b_j q_j^{\otimes n+2}
    &\Rightarrow \sum_{i=1}^{m'} a_i p_i^{\otimes n}\left<p_i^{\otimes 2},\cdot \right> = \sum_{j=1}^l b_j q_j^{\otimes n}\left<q_j^{\otimes 2},\cdot\right> \label{eqn:kad}\\
    &\Rightarrow \sum_{i=1}^{m'} a_i p_i^{\otimes n}\left<p_i^{\otimes 2},z^{\otimes 2} \right> = \sum_{j=1}^l b_j q_j^{\otimes n}\left<q_j^{\otimes 2},z^{\otimes 2}\right>\notag\\
    &\Rightarrow \sum_{i=1}^{m'-(k-1)} a_i p_i^{\otimes n}\left<p_i,z \right>^2 = \sum_{j=1}^l b_j q_j^{\otimes n} \left<q_j,z \right>^2\notag.
  \end{align}
  The implication (\ref{eqn:kad}) follows from the equivalence between tensor products and Hilbert-Schmidt operators (see \cite{kadison83} Proposition 2.6.9). Let $\lambda = \sum_{i=1}^{m'-(k-1)}a_i\left<p_i,z\right>^2$, $a_i' = a_i\left<p_i,z\right>^2/\lambda$, and $b_i'= b_i\left<q_i,z\right>^2/\lambda$. Without loss of generality we will assume that $\left<q_i,z\right> \neq 0$ for $i\in[l']$ and $\left<q_i,z\right> = 0$ for $i>l'$, with $l'$ potentially equaling $l$. Note that $a_i'\ge0$ and $\sum_{i=1}^{m'-(k-1)}a_i'=1$ and likewise for $b_i'$, since the right hand side of \eqref{eqn:bprime-eq} is a convex combination of pdfs that itself must be equal to a pdf. So now we have

  \begin{equation}\label{eqn:bprime-eq}
    \sum_{i=1}^{m'-(k-1)} a_i' p_i^{\otimes n} = \sum_{j=1}^{l'} b_j' q_j^{\otimes n}. 
  \end{equation}
  Note that
  \begin{align*}
    2m'-2 \le (k-1)((n+2)-1)
    \Rightarrow& 2(m'-1) \le 2(k-1) + (k-1)(n-1)\\
    \Rightarrow& 2(m' - (k-1))-2 \le (k-1)(n-1)
  \end{align*}
  so by the induction hypothesis we have that the mixture of measures $\sum_{i=1}^{m'-(k-1)} a_i' \delta_{\mu_i}$ is $n$-determined\footnote{There is a somewhat suble point here that two measures are equal if they admit the same measure. So even though some components in the mixture of measures $\sum_{i=1}^{m'-(k-1)} a_i' \delta_{\mu_i}$ may have a zero coefficient, it is still a mixture of measures in the sense that was described in Section \ref{ssec:problem-setting}, although it may have fewer than $m'-(k-1)$ mixture components.}. It follows that
  $\sum_{i=1}^{m'-(k-1)} a_i' \delta_{\mu_i} = \sum_{j=1}^{l'} b_j' \delta_{\nu_j}$. Without loss of generality we will assume that $\mu_1 = \nu_1$ and $a_1' = b_1'$. It follows that $p_1=q_1$ and thus $a_1=b_1$. By the same argument it follows that $\nu_i=\mu_i$ and $a_i = b_i$ for all $i$ and, because $\sum_{i=1}^{m'}b_i = 1$, that $m'=l$ and thus $\sP = \sQ$ and $\sP$ is $n+2$-determined, which finishes our proof.
\end{proof}

\acks
{Robert A. Vandermeulen acknowledges support by the German Federal Ministry of Education and Research (BMBF) for the Berlin Institute for the Foundations of Learning and Data (BIFOLD) (01IS18037A). Ren\'e Saitenmacher acknowledges support by the German Federal Ministry of Education and Research (BMBF) in the project Patho234 (763031LO207).
}

\bibliographystyle{plain}
\bibliography{rvdm}

\begin{thebibliography}{35}
\providecommand{\natexlab}[1]{#1}
\providecommand{\url}[1]{\texttt{#1}}
\expandafter\ifx\csname urlstyle\endcsname\relax
  \providecommand{\doi}[1]{doi: #1}\else
  \providecommand{\doi}{doi: \begingroup \urlstyle{rm}\Url}\fi

\bibitem[Allman et~al.(2009)Allman, Matias, and Rhodes]{allman09}
Elizabeth~S. Allman, Catherine Matias, and John~A. Rhodes.
\newblock Identifiability of parameters in latent structure models with many
  observed variables.
\newblock \emph{Ann. Statist.}, 37\penalty0 (6A):\penalty0 3099--3132, 12 2009.
\newblock \doi{10.1214/09-AOS689}.
\newblock URL \url{http://dx.doi.org/10.1214/09-AOS689}.

\bibitem[Anandkumar et~al.(2014)Anandkumar, Ge, Hsu, Kakade, and
  Telgarsky]{anandkumar14}
Animashree Anandkumar, Rong Ge, Daniel Hsu, Sham~M. Kakade, and Matus
  Telgarsky.
\newblock Tensor decompositions for learning latent variable models.
\newblock \emph{Journal of Machine Learning Research}, 15:\penalty0 2773--2832,
  2014.
\newblock URL \url{http://jmlr.org/papers/v15/anandkumar14b.html}.

\bibitem[Anderson et~al.(2014)Anderson, Belkin, Goyal, Rademacher, and
  Voss]{anderson14}
Joseph Anderson, Mikhail Belkin, Navin Goyal, Luis Rademacher, and James Voss.
\newblock The more, the merrier: {The} blessing of dimensionality for learning
  large {G}aussian mixtures.
\newblock In \emph{Proceedings of The 27th Conference on Learning Theory},
  pages 1135--1164, 2014.

\bibitem[{Aragam} and {Tai}(2022)]{aragam22}
Bryon {Aragam} and Wai~Ming {Tai}.
\newblock {A super-polynomial lower bound for learning nonparametric mixtures}.
\newblock \emph{arXiv e-prints}, art. arXiv:2203.15150, March 2022.

\bibitem[{Aragam} and {Yang}(2021)]{aragam21}
Bryon {Aragam} and Ruiyi {Yang}.
\newblock {Uniform Consistency in Nonparametric Mixture Models}.
\newblock \emph{arXiv e-prints}, art. arXiv:2108.14003, August 2021.

\bibitem[Aragam et~al.(2020)Aragam, Dan, Xing, and Ravikumar]{aragam20}
Bryon Aragam, Chen Dan, Eric~P. Xing, and Pradeep Ravikumar.
\newblock {Identifiability of nonparametric mixture models and Bayes optimal
  clustering}.
\newblock \emph{The Annals of Statistics}, 48\penalty0 (4):\penalty0 2277 --
  2302, 2020.
\newblock \doi{10.1214/19-AOS1887}.
\newblock URL \url{https://doi.org/10.1214/19-AOS1887}.

\bibitem[Blanchard and Scott(2014)]{blanchard14}
Gilles Blanchard and Clayton Scott.
\newblock Decontamination of mutually contaminated models.
\newblock In \emph{Proceedings of the Seventeenth International Conference on
  Artificial Intelligence and Statistics, {AISTATS} 2014, Reykjavik, Iceland,
  April 22-25, 2014}, pages 1--9, 2014.
\newblock URL \url{http://jmlr.org/proceedings/papers/v33/blanchard14.html}.

\bibitem[Blanchard et~al.(2011)Blanchard, Lee, and Scott]{blanchard11}
Gilles Blanchard, Gyemin Lee, and Clayton Scott.
\newblock Generalizing from several related classification tasks to a new
  unlabeled sample.
\newblock In John Shawe-Taylor, Richard~S. Zemel, Peter~L. Bartlett, Fernando
  C.~N. Pereira, and Kilian~Q. Weinberger, editors, \emph{NIPS}, pages
  2178--2186, 2011.

\bibitem[Bruni and Koch(1985)]{bruni85}
C.~Bruni and G.~Koch.
\newblock Identifiability of continuous mixtures of unknown {G}aussian
  distributions.
\newblock \emph{Ann. Probab.}, 13\penalty0 (4):\penalty0 1341--1357, 11 1985.
\newblock \doi{10.1214/aop/1176992817}.
\newblock URL \url{http://dx.doi.org/10.1214/aop/1176992817}.

\bibitem[Dan et~al.(2018)Dan, Leqi, Aragam, Ravikumar, and Xing]{dan18}
Chen Dan, Liu Leqi, Bryon Aragam, Pradeep~K Ravikumar, and Eric~P Xing.
\newblock The sample complexity of semi-supervised learning with nonparametric
  mixture models.
\newblock In S.~Bengio, H.~Wallach, H.~Larochelle, K.~Grauman, N.~Cesa-Bianchi,
  and R.~Garnett, editors, \emph{Advances in Neural Information Processing
  Systems}, volume~31. Curran Associates, Inc., 2018.
\newblock URL
  \url{https://proceedings.neurips.cc/paper/2018/file/8ba6c657b03fc7c8dd4dff8e45defcd2-Paper.pdf}.

\bibitem[Devroye et~al.(1996)Devroye, Györfi, and Lugosi]{devroye97}
Luc Devroye, László Györfi, and Gábor Lugosi.
\newblock \emph{A Probabilistic Theory of Pattern Recognition}, volume~31 of
  \emph{Stochastic Modelling and Applied Probability}.
\newblock Springer, 1996.
\newblock ISBN 978-1-4612-0711-5.

\bibitem[Elmore and Wang(2003)]{elmore2003}
Ryan Elmore and Shaoli Wang.
\newblock Identifiability and estimation in finite mixture models with
  multinomial components.
\newblock Technical Report 03-04, Pennsylvania State University, Department of
  Statistics, 2003.

\bibitem[Folland(1999)]{folland99}
Gerald~B. Folland.
\newblock \emph{Real analysis: modern techniques and their applications}.
\newblock Pure and applied mathematics. Wiley, 1999.
\newblock ISBN 9780471317166.

\bibitem[Kadison and Ringrose(1983)]{kadison83}
R.V. Kadison and J.R. Ringrose.
\newblock \emph{Fundamentals of the theory of operator algebras. V1: Elementary
  theory}.
\newblock Pure and Applied Mathematics. Elsevier Science, 1983.
\newblock ISBN 9780080874166.

\bibitem[Kallenberg(2005)]{kallenberg05}
Olav Kallenberg.
\newblock \emph{Probabilistic Symmetries and Invariance Principles}.
\newblock Probability and Its Applications. 2005.
\newblock ISBN 978-0-387-25115-8.
\newblock \doi{10.1007/0-387-28861-9}.

\bibitem[Kargas et~al.(2018)Kargas, Sidiropoulos, and Fu]{kargas18}
Nikos Kargas, Nicholas~D. Sidiropoulos, and Xiao Fu.
\newblock Tensors, learning, and “kolmogorov extension” for finite-alphabet
  random vectors.
\newblock \emph{IEEE Transactions on Signal Processing}, 66\penalty0
  (18):\penalty0 4854--4868, 2018.
\newblock \doi{10.1109/TSP.2018.2862383}.

\bibitem[Kim(1984)]{kim1984}
Byung~Soo Kim.
\newblock \emph{Studies of multinomial mixture models}.
\newblock PhD thesis, The University of North Carolina at Chapel Hill, 1984.

\bibitem[{Kivva} et~al.(2022){Kivva}, {Rajendran}, {Ravikumar}, and
  {Aragam}]{kivva22}
Bohdan {Kivva}, Goutham {Rajendran}, Pradeep {Ravikumar}, and Bryon {Aragam}.
\newblock {Identifiability of deep generative models under mixture priors
  without auxiliary information}.
\newblock \emph{arXiv e-prints}, art. arXiv:2206.10044, June 2022.

\bibitem[Kruskal(1977)]{kruskal77}
Joseph~B. Kruskal.
\newblock Three-way arrays: rank and uniqueness of trilinear decompositions,
  with application to arithmetic complexity and statistics.
\newblock \emph{Linear Algebra and its Applications}, 18\penalty0 (2):\penalty0
  95 -- 138, 1977.
\newblock ISSN 0024-3795.

\bibitem[Mikolov et~al.(2013)Mikolov, Chen, Corrado, and Dean]{mikolov2013}
Tom{\'{a}}s Mikolov, Kai Chen, Greg Corrado, and Jeffrey Dean.
\newblock Efficient estimation of word representations in vector space.
\newblock In Yoshua Bengio and Yann LeCun, editors, \emph{1st International
  Conference on Learning Representations, {ICLR} 2013, Scottsdale, Arizona,
  USA, May 2-4, 2013, Workshop Track Proceedings}, 2013.
\newblock URL \url{http://arxiv.org/abs/1301.3781}.

\bibitem[Muandet and Sch\"{o}lkopf(2013)]{muandet13}
Krikamol Muandet and Bernhard Sch\"{o}lkopf.
\newblock One-class support measure machines for group anomaly detection.
\newblock \emph{CoRR}, abs/1303.0309, 2013.
\newblock URL
  \url{http://dblp.uni-trier.de/db/journals/corr/corr1303.html#abs-1303-0309}.

\bibitem[P\'{o}czos et~al.(2013)P\'{o}czos, Singh, Rinaldo, and
  Wasserman]{poczos13}
Barnab\'{a}s P\'{o}czos, Aarti Singh, Alessandro Rinaldo, and Larry~A.
  Wasserman.
\newblock Distribution-free distribution regression.
\newblock In \emph{AISTATS}, volume~31 of \emph{JMLR Proceedings}, pages
  507--515. JMLR.org, 2013.
\newblock URL
  \url{http://dblp.uni-trier.de/db/conf/aistats/aistats2013.html#PoczosSRW13}.

\bibitem[Portela(2008)]{portela08}
J.~Portela.
\newblock Clustering discrete data through the multinomial mixture model.
\newblock \emph{Communications in Statistics - Theory and Methods}, 37\penalty0
  (20):\penalty0 3250--3263, 2008.
\newblock \doi{10.1080/03610920802162623}.
\newblock URL \url{https://doi.org/10.1080/03610920802162623}.

\bibitem[Rabani et~al.(2014)Rabani, Schulman, and Swamy]{rabani14}
Yuval Rabani, Leonard~J. Schulman, and Chaitanya Swamy.
\newblock Learning mixtures of arbitrary distributions over large discrete
  domains.
\newblock In \emph{Proceedings of the 5th Conference on Innovations in
  Theoretical Computer Science}, ITCS '14, pages 207--224, New York, NY, USA,
  2014. ACM.
\newblock ISBN 978-1-4503-2698-8.
\newblock \doi{10.1145/2554797.2554818}.
\newblock URL \url{http://doi.acm.org/10.1145/2554797.2554818}.

\bibitem[Rhodes(2009)]{rhodes09}
John~A. Rhodes.
\newblock A concise proof of kruskal’s theorem on tensor decomposition.
\newblock \emph{Linear Algebra and its Applications}, 432:\penalty0 1818--1824,
  2009.

\bibitem[Ritchie et~al.(2020)Ritchie, Vandermeulen, and Scott]{ritchie21}
Alexander Ritchie, Robert~A Vandermeulen, and Clayton Scott.
\newblock Consistent estimation of identifiable nonparametric mixture models
  from grouped observations.
\newblock In H.~Larochelle, M.~Ranzato, R.~Hadsell, M.~F. Balcan, and H.~Lin,
  editors, \emph{Advances in Neural Information Processing Systems}, volume~33,
  pages 11676--11686. Curran Associates, Inc., 2020.
\newblock URL
  \url{https://proceedings.neurips.cc/paper/2020/file/866d90e0921ac7b024b47d672445a086-Paper.pdf}.

\bibitem[Sidiropoulos and Bro(2000)]{sidiropoulos00}
Nicholas~D. Sidiropoulos and Rasmus Bro.
\newblock On the uniqueness of multilinear decomposition of n-way arrays.
\newblock \emph{Journal of Chemometrics}, 14\penalty0 (3):\penalty0 229--239,
  2000.

\bibitem[Song et~al.(2014)Song, Anandkumar, Dai, and Xie]{song14}
Le~Song, Animashree Anandkumar, Bo~Dai, and Bo~Xie.
\newblock Nonparametric estimation of multi-view latent variable models.
\newblock In \emph{Proceedings of the 31st International Conference on
  International Conference on Machine Learning - Volume 32}, ICML'14, page
  II–640–II–648. JMLR.org, 2014.

\bibitem[Szab{{\'o}} et~al.(2016)Szab{{\'o}}, Sriperumbudur, P{{\'o}}czos, and
  Gretton]{szabo14}
Zolt{{\'a}}n Szab{{\'o}}, Bharath~K. Sriperumbudur, Barnab{{\'a}}s
  P{{\'o}}czos, and Arthur Gretton.
\newblock Learning theory for distribution regression.
\newblock \emph{Journal of Machine Learning Research}, 17\penalty0
  (152):\penalty0 1--40, 2016.
\newblock URL \url{http://jmlr.org/papers/v17/14-510.html}.

\bibitem[Teicher(1963)]{teicher63}
Henry Teicher.
\newblock Identifiability of finite mixtures.
\newblock \emph{Ann. Math. Statist.}, 34\penalty0 (4):\penalty0 1265--1269, 12
  1963.
\newblock \doi{10.1214/aoms/1177703862}.
\newblock URL \url{http://dx.doi.org/10.1214/aoms/1177703862}.

\bibitem[Vandermeulen and Scott(2019)]{vandermeulen19}
Robert~A. Vandermeulen and Clayton~D. Scott.
\newblock An operator theoretic approach to nonparametric mixture models.
\newblock \emph{Ann. Statist.}, 47\penalty0 (5):\penalty0 2704--2733, 10 2019.
\newblock \doi{10.1214/18-AOS1762}.
\newblock URL \url{https://doi.org/10.1214/18-AOS1762}.

\bibitem[Vankadara et~al.(2021)Vankadara, Bordt, von Luxburg, and
  Ghoshdastidar]{vankadara21}
Leena~C. Vankadara, Sebastian Bordt, Ulrike von Luxburg, and Debarghya
  Ghoshdastidar.
\newblock Recovery guarantees for kernel-based clustering under non-parametric
  mixture models.
\newblock In Arindam Banerjee and Kenji Fukumizu, editors, \emph{Proceedings of
  The 24th International Conference on Artificial Intelligence and Statistics},
  volume 130 of \emph{Proceedings of Machine Learning Research}, pages
  3817--3825. PMLR, 13--15 Apr 2021.
\newblock URL \url{https://proceedings.mlr.press/v130/vankadara21a.html}.

\bibitem[Vinayak et~al.(2019)Vinayak, Kong, Valiant, and Kakade]{vinayak19}
Ramya~Korlakai Vinayak, Weihao Kong, Gregory Valiant, and Sham Kakade.
\newblock Maximum likelihood estimation for learning populations of parameters.
\newblock In Kamalika Chaudhuri and Ruslan Salakhutdinov, editors,
  \emph{Proceedings of the 36th International Conference on Machine Learning},
  volume~97 of \emph{Proceedings of Machine Learning Research}, pages
  6448--6457, Long Beach, California, USA, 09--15 Jun 2019. PMLR.
\newblock URL \url{http://proceedings.mlr.press/v97/vinayak19a.html}.

\bibitem[{Wei} and {Nguyen}(2020)]{wei20}
Yun {Wei} and XuanLong {Nguyen}.
\newblock {Convergence of de Finetti's mixing measure in latent structure
  models for observed exchangeable sequences}.
\newblock \emph{arXiv e-prints}, art. arXiv:2004.05542, April 2020.

\bibitem[Yakowitz and Spragins(1968)]{yakowitz68}
Sidney~J. Yakowitz and John~D. Spragins.
\newblock On the identifiability of finite mixtures.
\newblock \emph{Ann. Math. Statist.}, 39\penalty0 (1):\penalty0 209--214, 02
  1968.
\newblock \doi{10.1214/aoms/1177698520}.
\newblock URL \url{http://dx.doi.org/10.1214/aoms/1177698520}.

\end{thebibliography}
\appendix
\section{Proofs}\label{app:proofs}
\begin{proof}\textbf{of Lemma \ref{lem:iplinind}}
  For the forward direction, since $x_1,\ldots,x_m$ are linearly independent we can find the associated $z_1,\ldots,z_m$ from the Gram-Schmidt process. We prove the other direction by contradiction: suppose $x_1,\ldots,x_m$ are not linearly independent but there exist $z_1,\ldots,z_m$ satisfying the property in the lemma statement. From this it follows (without loss of generality) that $x_1 =\sum_{i=2}^m\alpha_i x_i$. We also know that there exists $z_1$ such that $\left<x_1,z_1\right> = 1$ but $\left<x_i,z_1\right> =0$ for all $i\ge 2$. Then we have that
  \begin{align*}
    1= \left<x_1,z_1\right>
    = \left<\sum_{i=2}^m\alpha_i x_i,z_1\right>
    = \sum_{i=2}^m\alpha_i\left< x_i,z_1\right>
    =0,
  \end{align*}
  a contradiction.
\end{proof}

\begin{proof}\textbf{of Lemma \ref{lem:kpindpow}}
  We first prove the case where $m+1 = (n-1)(k-1) + k'$ or equivalently $m=(n-1)(k-1)+(k'-1)$. We will use Lemma \ref{lem:iplinind} to demonstrate the linear independence of $x,x_1,\ldots,x_m$. First we will show that there exists a tensor which is perpendicular to $x^{\otimes n}$ and all but one of the vectors in $x_1^{\otimes n},\ldots,x_m^{\otimes n}$. To do this we relabel $x_1,\ldots,x_m$ to $x_{i,j}$ for $\left(i,j\right) \in [n-1]\times[k-1]$ and $x'_1,\ldots,x'_{k'-1}$. From $k$-independence we can find $z_1,\ldots,z_{n-1}$ such that $\left<z_i,x_{i,j}\right> = 0$ for all $i,j$ and $\left<z_i,x'_1\right> = 1$. Likewise, from $k'$-independence there exists $z$ such that $\left<z,x'_1\right> = 1$, $\left<z,x'_i\right> = 0$ for all $2\le i \le k'-1$, and $\left<z,x\right>=0$. Now we have that
  \begin{align*}
    \left<{x'}_1^{\otimes n}, z\otimes \prod_{i=1}^{n-1} z_i \right> =\left<x'_1,z\right> \prod_{i=1}^{n-1}\left<x'_1, z_i \right> &= 1\\
    \left<{x'}_i^{\otimes n}, z\otimes \prod_{j=1}^{n-1} z_j \right> = \left<x'_i,z\right> \prod_{j=1}^{n-1}\left<x'_i, z_j \right> &= 0 \quad \forall i\ge2\\
    \left<x_{i,j}^{\otimes n}, z\otimes \prod_{l=1}^{n-1} z_l \right> = \left<x_{i,j},z\right> \prod_{i=1}^{n-1}\left<{x}_{i,j}, z_l \right> &= 0 \quad \forall (i,j)\\
    \left<x^{\otimes n}, z\otimes \prod_{i=1}^{n-1} z_i \right>= \left<x,z\right> \prod_{i=1}^{n-1}\left<x, z_i \right> &= 0.
  \end{align*}
  Because $x'_1$ was arbitrary due to relabeling, there exist tensors $\bz_1,\ldots,\bz_m$ such that $\left<x_i^{\otimes n},\bz_i\right> = 1$ for all $i$, $\left<x_j^{\otimes n}, \bz_i\right> = 0$ for all $j\neq i$, and $\left<x^{\otimes n},\bz_i\right> = 0$ for all $i$.

  We will now find a tensor which is perpendicular to all $x_1^{\otimes n},\ldots,x_m^{\otimes n}$ but is not perpendicular to $x^{\otimes n}$, which will complete our proof of the $m+1 = (n-1)(k-1) + k'$ case. If $x^{\otimes n-1}\notin \spn\left(\left\{x_1^{\otimes n-1},\ldots,x_{(n-1)(k-1)+1}^{\otimes n-1} \right\} \right)$ then there exists a vector $\bz$ such that $\left<x^{\otimes n-1},\bz\right>=1$ and $\left<x_i^{\otimes n-1}, \bz\right> = 0$ for all $i \in [(n-1)(k-1)+1]$. By $k'$-independence there exists a vector $z$ such that $\left<x,z\right>=1$ and $\left<x_j,z\right>=0$ for all $j\in\left\{(n-1)(k-1)+1,\ldots, (n-1)(k-1)+k'-1\right\}$. From this it follows that $\left<x^{\otimes n},\bz\otimes z \right> = 1$ but $\left<x_i^{\otimes n}, \bz\otimes z\right> =0$ for all $i$.

  Now we will assume that $x^{\otimes n-1}\in \spn\left(\left\{x_1^{\otimes n-1},\ldots,x_{(n-1)(k-1)+1}^{\otimes n-1} \right\} \right)$. Note that $x_1^{\otimes n-1},\ldots,x_{(n-1)(k-1)+1}^{\otimes n-1}$ are linearly independent by Lemma \ref{lem:kindpow}. From this it follows that there exists exactly one linear combination of $x_1^{\otimes n-1},\ldots,x_{(n-1)(k-1)+1}^{\otimes n-1}$ which is equal to $x^{\otimes n-1}$. We assume without loss of generality that $x_{(n-1)(k-1)+1}^{\otimes n-1}$ has a nonzero coefficient in that solution. Now have that $x^{\otimes n-1},x_1^{\otimes n-1},\ldots , x_{(n-1)(k-1)}^{\otimes n-1}$ are linearly independent. From this there exists $\bz$ such that $\left<\bz, x_i^{\otimes n-1}\right> = 0$ for all $i\in[(n-1)(k-1)]$ but $\left<\bz,x^{\otimes n-1}\right> = 1$. By $k'$-independence there exists $z$ such that $z\perp x_{(n-1)(k-1)+1},\ldots,x_{(n-1)(k-1)+(k'-1)} $ but $\left<z,x\right> = 1$. We now have that $z\otimes \bz\perp x_i^{\otimes n}$ for all $i$ and $\left<z\otimes \bz , x^{\otimes n }\right> = 1$ which finishes the $m+1 = (n-1)(k-1) + k'$ case.\sloppy

  We will now take care of the other cases for the values of $m$. If $m+1 < (n-1)(k-1) + k'$ then $x^{\otimes n},x_1^{\otimes n},\ldots, x_m^{\otimes n}$ are linearly independent by the same argument made in the $m+1 = (n-1)(k-1) + k'$ case so $x^{\otimes n},x_1^{\otimes n},\ldots, x_m^{\otimes n}$ are $m+1$-independent. 

  If $m+1 > (n-1)(k-1) + k'$ we would like to show that any subsequence of $x^{\otimes n},x_{1}^{\otimes n},\ldots,x_{m}^{\otimes n}$ containing $(n-1)(k-1)+k'$ vectors is linearly independent. We consider two cases, where a subsequence contains $x^{\otimes n}$ or it does not. If it does then $x^{\otimes n},x_{i_1}^{\otimes n},\ldots,x_{i_{(n-1)(k-1)+k'-1}}^{\otimes n}$ is linearly independent from the $m+1= (n-1)(k-1) + k'$ case and replacing $x^{\otimes n}$ with $x_{i_{(n-1)(k-1)+k'}}^{\otimes n}$ leaves this sequence linearly independent since $k \ge k'$ so $x^{\otimes n}, x_1^{\otimes n},\ldots, x^{\otimes n}_m$ is $((n-1)(k-1)+k')$-independent, thus finishing the proof.  \sloppy
\end{proof}

\begin{proof}\textbf{of Theorem \ref{thm:kruskalhilb}}
  Note that two Hilbert spaces with the same finite dimension are isometric to one another. Suppose that $a_1,\ldots,a_l$, $b_1,\ldots, b_l$ and $c_1,\ldots,c_l$ with $m\le l$ such that
  \begin{equation*}
    \sum_{i=1}^r x_i\otimes y_i \otimes z_i = \sum_{j=1}^l a_j \otimes b_j \otimes c_j.
  \end{equation*}
  Let $\overline{\sH}_x = \spn\left(\left\{x_1,\ldots, x_r, a_1,\ldots a_l \right\} \right)$ with $\overline{\sH}_y$ and $\overline{\sH}_z$ defined similarly. Because these spaces are finite dimensional the theorem follows from direct application of Kruskal's Theorem, see the following. 
\end{proof}
\begin{definition}
  A matrix $M$ has \emph{Kruskal rank} $k$ if every collection of $k$ columns of $M$ are linearly independent.
\end{definition}
The following theorem is a statement Kruskal's Theorem \citep{kruskal77} adapted from \citep{rhodes09}.
\begin{thm}[Kruskal's Theorem]
  For a matrix $M$ let $M_i$ be its $i$th column vector. Let $A,B,C$ be matrices of dimensions $d_A\times r,d_B\times r$, and $d_C\times r$ respectively with Kruskal rank $k_A,k_B,k_C$ respectively and let $k_A+k_B+k_C \ge 2r+2$. Let $F,G,H$ be matrices with dimensions $d_A \times s, d_B \times s,d_C \times s$ with $s\le r$ and 
  \begin{equation*}
    \sum_{i=1}^r A_i\otimes B_i \otimes C_i = \sum_{j=1}^s F_j\otimes G_j\otimes H_j.
  \end{equation*}
  Then there exists a permutation matrix $P$ and invertible diagonal matrices $D_A,D_B,D_C$ such that $D_A D_B D_C = I_r$ such that
  \begin{align*}
    F = AD_AP\\
    G = BD_BP\\
    H = CD_CP.
  \end{align*}
\end{thm}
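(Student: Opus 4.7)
The plan is to follow Kruskal's original combinatorial proof strategy, whose central ingredient is the so-called Permutation Lemma. The starting point is to reduce the three-tensor identity $\sum_{i=1}^r A_i\otimes B_i\otimes C_i = \sum_{j=1}^s F_j\otimes G_j\otimes H_j$ to a family of matrix identities by contracting one mode. For any vector $v\in\rn^{d_C}$, slicing in the third mode gives
\[\sum_{i=1}^r (v^\top C_i)\, A_i B_i^\top = \sum_{j=1}^s (v^\top H_j)\, F_j G_j^\top,\]
so both sides agree as matrices for every $v$. Equivalently, $A\,\mathrm{diag}(C^\top v)\,B^\top = F\,\mathrm{diag}(H^\top v)\,G^\top$, which converts the tensor uniqueness question into a simultaneous family of matrix factorizations parameterized by $v$, with the Kruskal ranks $k_A,k_B,k_C$ controlling how much structure can be extracted.

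The heart of the argument is Kruskal's Permutation Lemma: roughly, if $X$ is a $d\times r$ matrix of Kruskal rank $k_X\ge 2$ and $Y$ is a $d'\times r$ matrix such that for every row vector $v^\top$ the number of zero entries of $v^\top Y$ is at least $r-\omega(v^\top X)+k_X-1$ (where $\omega$ counts zero entries), then up to a column permutation the columns of $Y$ are scalar multiples of the columns of $X$. The hypothesis $k_A+k_B+k_C\ge 2r+2$ is exactly what is needed to verify this zero-entry inequality when the lemma is applied to each of the three factor pairs.

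The remainder of the proof is to apply the Permutation Lemma three times in a coordinated way. Using the slicing identity together with the rank condition on $B$, one shows that any linear dependence among the columns of $F$ must be mirrored by a dependence in $A$, which after counting supports gives the lemma's hypothesis for the pair $(X,Y)=(A,F)$. Cycling the roles of the three modes yields the same conclusion for $(B,G)$ and $(C,H)$. The three resulting permutations must coincide because, once any single pair has been matched, each rank-one summand $A_i\otimes B_i\otimes C_i$ is canonically paired with a unique summand $F_j\otimes G_j\otimes H_j$ on the other side. Substituting $F=A D_A P$, $G=B D_B P$, $H=C D_C P$ into the original identity and equating coefficients of a linearly independent rank-one term then forces $D_A D_B D_C=I_r$ and $s=r$.

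The main obstacle, by a wide margin, is establishing the Permutation Lemma itself. Its proof is a delicate combinatorial argument on supports of vectors in the row space of $X^\top$: one must show that vectors of minimal support are abundant (this is where the Kruskal rank hypothesis enters), track how their supports transform when expressed relative to $Y$, and finally deduce that each column of $X$ admits a unique partner column of $Y$, supported on the same index. Once the Permutation Lemma is in hand, the remainder of the proof is essentially bookkeeping, with the arithmetic inequality $k_A+k_B+k_C\ge 2r+2$ being precisely what is needed to run the three parallel applications simultaneously.
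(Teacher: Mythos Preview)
The paper does not prove this statement at all: Kruskal's Theorem is stated in the appendix as a classical result, with the explicit attribution ``adapted from \cite{rhodes09}'' and a citation to \cite{kruskal77}. There is no accompanying proof in the paper; it is invoked solely as a black box to derive the Hilbert-space extension (Theorem~\ref{thm:kruskalhilb}).

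Your proposal, by contrast, sketches an actual proof of Kruskal's Theorem along the lines of Kruskal's original argument---slicing in one mode to obtain a family of matrix identities, applying the Permutation Lemma, and then cycling through the three modes. This is indeed the standard route (and essentially the one taken in the references the paper cites), so your outline is consistent with the literature the paper defers to. But it is important to recognize that you are doing strictly more than the paper does: the paper treats this as an imported result, not something to be proved. If your goal is to match the paper, the correct ``proof'' here is simply a citation.

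As for the sketch itself, it is broadly accurate but remains at the level of a plan rather than a proof. The Permutation Lemma step is the genuine difficulty, as you acknowledge, and your description of it (the zero-entry inequality, the role of the Kruskal-rank condition) is correct in spirit; however, verifying the hypothesis of the Permutation Lemma from the sliced identities and the inequality $k_A+k_B+k_C\ge 2r+2$ involves a nontrivial rank-counting argument that you have not carried out. You also need $s=r$ to emerge from the argument rather than be assumed, and your final substitution step presupposes this.
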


\begin{proof}\textbf{of Proposition \ref{prop:liprob}}
  Let $\Gamma_1,\ldots,\Gamma_d \simiid \Psi$. We will proceed by contradiction and assume that $\Gamma_1,\ldots,\Gamma_d$ are linearly dependent with nonzero probability. It follows that that $\Gamma_1 = \sum_{i=2}^{d} \alpha_i \Gamma_i$ for some $\alpha_2,\ldots,\alpha_d$ with nonzero probability. Let $1_d$ be the $d$-dimensional vector containing all ones. Because $\Gamma_1,\ldots,\Gamma_d$ are all probability vectors it follows that
  \begin{align*}
    1_d^T\Gamma_1 &= 1^T_d\sum_{i=2}^m \alpha_i \Gamma_i \\
    \Rightarrow 1 &= \sum_{i=2}^d \alpha_i.
  \end{align*}
  The probabilistic simplex lies in an affine subspace of dimension $d-1$ which we will call $S$. Because of this there exists an affine operator $f$ which is a bijection between $S$ and a closed subset of $\rn^{d-1}$ with $f(x) = Mx+b$ for some matrix $M$ and vector $b$. Let $\tGamma_i = f(\Gamma_i)$. We have that $\tGamma_i \sim\Psi\left(f^{-1}(\cdot)\right)$ is a measure on $\rn^{d-1}$ which is absolutely continuous wrt the Lebesgue measure on $\rn^{d-1}$ and thus $\tGamma_1,\ldots,\tGamma_d$ lie in general position with probability one (see \cite{devroye97} Section 4.5).
  Note that
  \begin{align*}
    \tGamma_1
    &= M\Gamma_1+b
    = M\left(\sum_{i=2}^d \alpha_i \Gamma_i \right)+\left(\sum_{j=2}^d \alpha_j \right)b
    = \sum_{i=2}^d \alpha_i M  \Gamma_i+  \alpha_i b\\
    &= \sum_{i=2}^d \alpha_i \left(M  \Gamma_i+   b\right)
    = \sum_{i=2}^d \alpha_i \tGamma_i.
  \end{align*}
  Since $\tGamma_2,\ldots,\tGamma_d$ trivially lie in a $\left(d-2\right)$-dimensional affine subspace there exists a vector $v\neq 0_{d-1}$ and $r$ such that $v^T\tGamma_i = r$ for $i\ge2$.
  Now we have that 
  \begin{align*}
    v^T\sum_{i=2}^d \alpha_i \tGamma_i = \sum_{j=1}^d \alpha_i r \Rightarrow v^T\tGamma_1 = r
  \end{align*}
  and thus, with nonzero probability, $\tGamma_1,\ldots,\tGamma_d$ do not lie in general position, a contradiction.
\end{proof}

\begin{proof}\textbf{of Theorem \ref{thm:nokident}}
  From \cite{vandermeulen19} Theorem 4.2, for all $m \ge 2$  there exists a mixture of measures $\sP = \sum_{i=1}^m a_i \delta_{\mu_i}$ which is not $2m-2$-identifiable, thus there exists a mixture of measures $\sQ = \sum_{i=1}^{m'} b_i \delta_{\nu_i}\neq \sP$ with $m' \le m$ and
  \begin{align*}
    \sum_{i=1}^m a_i \mu_i^{\times 2m-2} = \sum_{j=1}^{m'} b_j \nu_j^{\times 2m-2}.
  \end{align*}
  Since $(k-1)n \le 2m-2$ we have that either $(k-1)n = 2m-2$ and it directly follows that
  \begin{align*}
     \sum_{i=1}^m a_i \mu_i^{\times (k-1)n} = \sum_{j=1}^{m'} b_j \nu_j^{\times (k-1)n}
  \end{align*}
  or that $(k-1)n < 2m-2$ and we have
  \begin{align*}
    &\sum_{i=1}^m a_i \mu_i^{\times 2m-2} = \sum_{j=1}^{m'} b_j \nu_j^{\times 2m-2}\\
    &\Rightarrow\sum_{i=1}^m a_i \mu_i^{\times (k-1)n}\times \mu_i^{\times 2m-2 - (k-1)n} = \sum_{j=1}^{m'} b_j \nu_j^{\times (k-1)n} \times \nu_j^{\times 2m-2 - (k-1)n}\\
    &\Rightarrow\sum_{i=1}^m a_i \mu_i^{\times (k-1)n}\times \mu_i^{\times 2m-2 - (k-1)n}(\Omega^{\times 2m-2 - (k-1)n})\quad \text{ (essentially Lemma \ref{lem:noident})}\\
    &\qquad= \sum_{j=1}^{m'} b_j \nu_j^{\times (k-1)n} \times \nu_j^{\times 2m-2 - (k-1)n}\left(\Omega^{\times 2m-2 - (k-1)n}\right)\\
    &\Rightarrow \sum_{i=1}^m a_i \mu_i^{\times (k-1)n} = \sum_{j=1}^{m'} b_j \nu_j^{\times (k-1)n}.
  \end{align*}
  If we let $\sP'= \sum_{i=1}^m a_i \delta_{\mu_i^{\times k-1}}$ and $\sQ' = \sum_{i=1}^{m'}b_i \delta_{\nu_i^{\times k-1}}$ then we have that $V_n(\sP')= V_n(\sQ')$. To finish the proof we will show that $\mu_1^{\times k-1},\ldots, \mu_m^{\times k-1}$ are $k$-independent and we are done. 
  To do this we will proceed by contradiction, suppose that they are not $k$-independent and there exists a nontrivial linear combination of $k$ elements in $\mu_1,\ldots,\mu_m$ which is equal to zero. We will assume $\mu_1,\ldots,\mu_k$ without loss of generality satisfy this, so
  $$\sum_{i=1}^k \alpha_i \mu_i^{\times k-1}=0$$
  and there exists $i$ such that $\alpha_i \neq 0$.
  Embedding these measures as was done in the proof of Theorem \ref{thm:kident} we have that
  \begin{align*}
    \sum_{i=1}^k \alpha_i p_i^{\otimes k-1} = 0
  \end{align*}
  but since any pair of distinct $p_i,p_j$ are $2$-independent, applying Lemma \ref{lem:kindpow} gives us that $p_1^{\otimes k-1}, \ldots, p_k^{\otimes k-1}$ are linearly independent, a contradiction.\sloppy
\end{proof}

\begin{proof}\textbf{ of Theorem \ref{thm:nokdet}}
  From \cite{vandermeulen19} Theorem 4.4, for all $m\ge 1$ there exists a mixture of measures $\sP=\sum_{i=1}^m a_i \delta_{\mu_i}$ which is not $2m-1$-determined. From here this proof proceeds exactly as the proof of Theorem \ref{thm:nokident}.
\end{proof}

\end{document}